\title{{\bf Lipschitz and biLipschitz Maps on Carnot Groups}}
\author{\Large {William Meyerson}}
\date{}
\newtheorem{teo}{Theorem}[section]
\newtheorem{co}[teo]{Corollary}
\newtheorem{defi}[teo]{Definition}
\newtheorem{note}[teo]{Note}
\newtheorem{prop}[teo]{Proposition}
\newtheorem{fac}[teo]{Fact}
\begin{document}
\maketitle
\begin{abstract}
Suppose $A$ is an open subset of a Carnot group $G$ and $H$ is another 
Carnot group.  We show that a Lipschitz function from $A$ to $H$ whose 
image has positive Hausdorff measure in the appropriate dimension is 
biLipschitz on a subset of $A$ of positive Hausdorff measure.  
We also construct Lipschitz maps from open sets in Carnot groups to 
Euclidean space that do not decrease dimension.  Finally, we discuss 
two counterexamples to explain why Carnot group structure is 
necessary for these results.

Primary subject:  43A80

Keywords:  Analysis on Carnot groups
\end{abstract}

\section{Introduction}\label{sec1}
In 1988, Guy David proved in \cite{D1} that if $f$ is a Lipschitz function 
from the unit cube in $\textbf{R}^n$ to a subset of some Euclidean space 
with positive $n$-dimensional Hausdorff measure, there exists a subset $K$ of 
the domain of $f$ with positive $n$-dimensional Hausdorff measure such 
that $f$ is biLipschitz on $K$.

Shortly thereafter, Peter Jones proved the following stronger result in 
\cite{J}:  if $f$ is a Lipschitz function from the unit cube in 
$\textbf{R}^n$ to a 
subset of some Euclidean space, then the unit cube can be broken up into 
the union of a `garbage' set (whose image under $f$ has arbitarily small 
$n$-dimensional Hausdorff content) and a finite number of sets $K_1, 
\dots, K_N$ such that $f$ is biLipschitz on each $K_i$.

  Three years later, Guy David in \cite{D2} translated this proof 
into the language of wavelets, which are more readily generalizable 
to Heisenberg and other Carnot groups.  The proof as 
written in \cite{D2} only depends on a few general properties, all 
but one of which hold for Heisenberg (and other Carnot) groups.  

This story has further generalizations: for example, \cite{DS} generalizes Jones' argument to work with Lipschitz functions that are only defined on Ahlfors $d$-regular subsets of a Euclidean space $\textbf{R}^N$, with $d$ possibly less than $N$, while \cite{Se} allows the domain and range to be metric spaces subject to a specific condition. 

The second section of this paper adapts some of the ideas in 
\cite{D2} and \cite{J} to Carnot groups and then proves the ``Lipschitz implies biLipschitz'' result mentioned first in the abstract.

The third section will investigate the 
question of how big, in terms of dimension, Lipschitz images of Carnot 
groups in Euclidean space can be.  

Finally, the fourth section will explore two counterexamples explaining why Carnot group structure is 
necessary for these results.  In particular, neither Ahlfors regularity nor sub-Riemannian manifold structure would be sufficient.

The author would also like to thank Raanan Schul for showing him Guy 
David's proof of the decomposition of Lipschitz functions from subsets of
$\textbf{R}^m$ to $\textbf{R}^n$ into biLipschitz pieces.  Further, the 
author would like to thank Jeremy Tyson for showing him the usefulness of 
Cantor set constructions for creating maps on Carnot groups.

\section{Jones-Type Decomposition \\ for Carnot Groups}\label{sec2}

\subsection{Brief Outline}\label{sec21}

This section is organized as follows.  In Section 2.2 we shall give some 
definitions concerning Carnot groups and set up some notational conventions.  In Section 2.3 we shall 
state the five properties of Euclidean space on which David's argument 
rests and 
show how the first four of them work for Heisenberg groups.  In Section 
2.4 we will explain why these properties also work for other Carnot 
groups.  In Section 2.5, we shall prove our main result:  if $A$ is
an appropriate subset of the $k$th Heisenberg group $H_k$ corresponding 
roughly to the unit cube in $\textbf{R}^n$, and $F$ is a Lipschitz 
function from $A$ to another Heisenberg group whose image has positive 
Hausdorff $(2k + 2)$-dimensional measure, then there exists $B \subset A$ 
with positive Hausdorff $(2k + 2)$-dimensional measure such that $F$ is 
biLipschitz on $B$.  Finally, in Section 2.6 we will derive some corollaries of the main theorem from Section 2.5.  

Although our main focus is on the Heisenberg groups (especially $H_1$), 
all of the results in this paper apply equally well to Carnot groups in 
general.  To exploit this fact, the results in Section 2.5 will be stated 
and proved in the more general context of Carnot groups.

\subsection{Definitions}\label{sec22}

We begin by defining the Heisenberg groups.

\begin{defi}
The \textbf{nth Heisenberg group} $H_n$ is defined as the set 

$$\{(z_1, \dots, z_n, t): z_j \in \textbf{C}, t \in \textbf{R}\}$$

\noindent equipped with the following group law:

$$(z_1, \dots, z_n, t)(w_1, \dots, w_n, s) = (z_1 + w_1, \dots, z_n + w_n, 
t + s + \Im{\Sigma_{j = 1}^n z_j \bar{w_j}})$$

\noindent where $\Im$ denotes imaginary part.
\end{defi}

For $n = 1$, we often write $z_1$ in terms of its real components as $z_1 = x + iy$ and refer to the point $(z_1, t)$ as $(x, y, t)$, so $H_1$ inherits a natural Euclidean coordinate structure from $\textbf{R}^3$.

The Heisenberg group is a special example of a Carnot group, which is 
defined as follows:

\begin{defi}
A \textbf{Carnot group} $G$ is a connected, simply connected, nilpotent 
Lie group whose Lie algebra $\mathfrak{g}$ is graded, i.e.

$$\mathfrak{g} = \oplus_{j = 1}^d \mathfrak{g}_j$$ where
$$[\mathfrak{g}_1, \mathfrak{g}_j] = \mathfrak{g}_{j + 1}$$ and
$$\mathfrak{g}_{d+1} = \{0\}.$$ 
 We call $\mathfrak{g}_1$ 
the \textbf{horizontal} component of $\mathfrak{g}$.
\end{defi}

By standard results of Lie group theory (see, for example, \cite{V}), 
the exponential map gives a diffeomorphism between a Carnot group and its 
Lie algebra.  Further, the standard definition of a Lie algebra 
in terms of vector fields provides a canonical identification between the 
tangent space of a Lie group at a given point and the Lie group itself.  
(When $g \in G$ is fixed, for every tangent vector $v$ there is a unique 
$X \in \mathfrak{g}$ such that $X(g) = v$ and we can identify 
$\textrm{exp}(X)$ with $v$.)

We shall freely use these canonical identifications betwen a Carnot group, its Lie algebra, and its tangent space throughout this paper.  For 
example, every Carnot group has a coordinate structure induced by its Lie algebra.  For $H_n$, this coordinate structure was already mentioned in Definition 2.1, where $\mathfrak{g}_1$ consists of the 
points of the form $(z_1, \dots, z_n, 0)$ with final coordinate equal to 
zero.

Every Carnot group has a family of dilation homomorphisms $\{\delta_\lambda: \lambda > 0\}$ and a metric called the Carnot-Carath\'eodory metric.  They are defined as follows:

\begin{defi}
Let $\lambda > 0$, let $G$ be a Carnot group and let $g \in G$, where
$$g = \Sigma_i g_i$$
with $g_i \in \mathfrak{g}_i$.  Define the \textbf{dilation} 
$$\delta_\lambda(g) = \Sigma_i \lambda^i g_i.$$ 

\end{defi}

\begin{defi}
Let $G$ be a Carnot group, let $g, h \in G$, and let $\Gamma_{g,h}$ to be the set of all curves
$$\gamma:  [0, 1] \rightarrow G$$
with $\gamma(0) = g$, $\gamma(1) = h$, and $\gamma'(t) \in \mathfrak{g}_1$ 
for each $t \in [0, 1]$.
Define the \textbf{Carnot-Carath\'eodory} distance between $g$ and $h$ to be 
$$d_{CC}(g, h) = \inf_{\gamma \in \Gamma_{g, h}} \int_0^1 
|\gamma'(t)| dt$$
where $|\gamma'(t)|$ is the length of $\gamma'(t)$ in a fixed Euclidean metric on the real 
vector space $\mathfrak{g}_1$.
\end{defi} 
\noindent Because $\Gamma_{g, h}$ in the above definition is nonempty (cf 
\cite{Mo}), $d_{CC}(g, h) < \infty$ whenever $g, h \in G$.

\begin{note}
It is often easier to work with a comparable $L^{\infty}$ 
quasidistance function $d$ based on the Carnot metric.  For the first 
Heisenberg group $H_1$, this is done by defining distance to 
the origin as

$$d((x, y, z), (0, 0, 0)) = \textrm{max}(|x|, |y|, |z|^{.5})$$
and for an arbitrary $g, h$ in this group, defining

$$d(g, h) = d(h^{-1} g, (0, 0, 0)).$$
\end{note}

There is of course a completely analogous construction in an arbitrary 
Carnot group:  if $G$ is a Carnot group, we use the grading of its Lie 
algebra $\mathfrak{g}$ as in the definition of Carnot groups:

$$\mathfrak{g} = \oplus_{j = 1}^d \mathfrak{g}_j.$$
Because the identity element in a Carnot group is the image of the 
origin under the exponential map, we shall refer to it as $0$.  
Now, letting $g$ be an arbitrary point in $G$ we first define its 
quasidistance to the identity element, $d(g, 0)$, by recalling the 
direct sum decomposition

$$\exp^{-1}(g) = \Sigma_j g_j$$

\noindent with $g_j \in \mathfrak{g_j}$ and setting 
$$d(g, 0) = \textrm{max}_{1 \leq j \leq d} (||g_j||_j)^{\frac{1}{j}}.$$

\noindent where $||\cdot||_j$ is a norm on $\mathfrak{g}_j$ 
for $j = 1, \dots, d$.

\noindent Finally, for an arbitrary $g, h \in G$, we finish by setting 

$$d(g, h) = d(h^{-1} g, 0).$$

\noindent For the duration of this paper, $d_{CC}$ shall refer to 
Carnot-Carath\'eodory distance and $d$ shall refer to quasidistance.

A fundamental operation for Carnot groups is the Pansu differential, defined as follows (see \cite{Ca} for example):

\begin{defi}

Let $F:  G \rightarrow H$ be a function from one Carnot group $G$ to 
another Carnot group $H$.  The \textbf{Pansu differential} $DF(g)$  
of $F$ at $g \in G$ is the map

$$DF(g): G \rightarrow H$$ defined at $g' \in G$ as the limit

$$DF(g)(g') = \lim_{s \rightarrow 0} \delta_{s^{-1}} [F(g)^{-1} F(g 
\delta_s g')]$$
whenever the limit exists.

\end{defi}

Using the canonical identifications stated above, we can view the Pansu 
differential as a map between Lie algebras or as a map from the tangent 
space at $g \in G$ to the tangent space at $F(g)$.  We shall take 
advantage of this fact throughout the paper.

Further, in the tangent vector interpretation, the Pansu differential 
$DF(g)$ induces a linear map between the horizontal component of the 
tangent space of $G$ at $g$ and the horizontal component of the tangent 
space of $H$ at $F(g)$ (see \cite{P}).  Calling this linear map $MF(g)$, 
we can view $MF$ as a matrix-valued map sending $g$ to $MF(g)$.

 \subsection{Five key properties}\label{sec23}

\subsubsection{Dyadic decomposition}\label{sec231}

There exists a dyadic decomposition for Euclidean space defined as follows:  For each nonnegative integer $k$ we let $\mathcal{Q}_k$ be 
the set of all ``cubes" of the form 
$$(a_1 \cdot 2^{-k}, (a_1 + 1) \cdot 2^{-k}) \times \dots \times (a_n 
\cdot 2^{-k}, (a_n + 1) \cdot 2^{-k})$$ 
contained in the unit cube, where the $a_i$ are all integers.  Then the 
elements of $\mathcal{Q}_k$ are disjoint open sets.  
Further, each element of $\mathcal{Q}_k$ is (up to a set of measure zero) a 
disjoint union of elements of $\mathcal{Q}_{k + 1}$, the $\mathcal{Q}_k$'s are all translates 
of each other, and one can transform an arbitrary element of $\mathcal{Q}_k$ into 
an arbitrary element of $\mathcal{Q}_{k + 1}$ 
by a dilation (by a factor of $2^{-1}$) followed by translation.  
Finally, fixing a cube $Q \in \mathcal{Q}_k$ and letting $d$ be its diameter (i.e. 
$d = \sqrt n 2^{-k}$), the 
number of cubes in $\mathcal{Q}_k$ whose distance from $Q$ is at most $d$
is bounded above by a constant depending only on $n$.  

Our immediate goal is to generalize this decomposition to the Heisenberg 
group $H_1$.
To do this we loosely follow Christ's construction of Theorem 11 in 
\cite{Ch}.  First we let $B_0$ denote the discrete subgroup of $H_1$ generated by $(1, 0, 0)$ and $(0, 1, 0)$ and call it the \textbf{discrete Heisenberg group}.  We then define $B_n$, for each positive integer $n$, to be the image of $B_0$ under 
the dilation $\delta_{10^{-n}}$ (in particular, the first $2$ 
coordinates are multiplied by $10^{-n}$; the final coordinate is 
multiplied by 
$10^{-2n}$).  Equivalently, $B_n$ is the subgroup of the first
Heisenberg group generated by $(10^{-n}, 0, 0)$ and $(0, 10^{-n}, 0)$.  \textbf{If 
$x$ is a point in $B_n$, we give it 
the label $(x, n)$} and note that $x$ has a different label for each 
$B_n$ containing $x$.  We form 
a tree by defining an order relation $\leq$ on the set of all such pairs 
$(x, n)$.  We start this procedure with the following 
definition.

\begin{defi}  
$(x, \alpha)$ is a \textbf{parent} of $(y, \beta)$ if $\beta = \alpha + 1$ 
and $y = xg$ where the first two components of $g$ all lie in 
$(-\frac{1}{2} 10^{-\alpha}, \frac{1}{2} 10^{-\alpha}]$ and the final 
component lies in $(-\frac{1}{2} \cdot 10^{-2\alpha}, \frac{1}{2} \cdot 
10^{-2\alpha}]$.  
\end{defi}

Using the obvious analogies from family trees (`ancestor', `descendant', 
`grandparent', `sibling', etc.) for 
both the tree points and corresponding 
dyadic cubes (to be defined momentarily), we say $(x, \alpha) \leq (y, \beta)$ if $(y, \beta)$ is an 
ancestor of $(x, \alpha)$.  Following along exactly as in Definition 14 
of \cite{Ch}, we create 
from this tree a family of dyadic `cubes'.  In particular, we define
$$Q(x, \alpha) = \bigcup_{(y, \beta) \leq (x, \alpha)} B_{CC}(y, \frac{1}{10} 10^{-\beta})$$
where $B_{CC}(z, \epsilon)$ is the ball centered at $z$ of radius $\epsilon$ with respect to Carnot-Carath\'eodory distance.  We will say that each cube $Q(x, 
\alpha)$ is a \textbf{cube at scale $\alpha$} and we define $\mathcal{Q}_\alpha$ to be the set of all the cubes of scale $\alpha$.  All the cubes in $\mathcal{Q}_\alpha$ are translates of each 
other by elements of the discrete Heisenberg group of the appropriate 
scale; further, each member of 
each $\mathcal{Q}_\alpha$ is an open set while each element of $\mathcal{Q}_\alpha$ is (up to 
a set of measure zero) the disjoint union of elements of $\mathcal{Q}_{\alpha + 1}$.  Also, 
one can transform an arbitrary element of $\mathcal{Q}_\alpha$ into an arbitrary 
element of $\mathcal{Q}_{\alpha + 1}$ by 
a dilation (by a factor of $10^{-1}$) followed by translation.  Finally, 
the number of cubes in $\mathcal{Q}_\alpha$ within diam$(Q(x, \alpha))$ of $Q(x, 
\alpha)$ is bounded by a constant independent of $\alpha$.

Analogously, for the $k$th Heisenberg group, we begin by rewriting the elements of $H_k$ to 
mirror the above construction for $H_1$:  in other words, writing 
$z_j = x_{2j - 1} + i x_{2j}$ where $x_{2j - 1}, x_{2j} \in 
\textbf{R}$, we let $B_0$ be the subgroup of $H_k$ generated by

$$\{(x_1, \dots, x_{2k}, 0): x_j = \pm \delta_{j, l}, 1 \leq l \leq 2k\}$$ 

\noindent where $\delta_{j,l}$ is the Kronecker delta.  In this setting, $B_n$ would be the subgroup of $H_k$ generated by 
$$\{(x_1, \dots, x_{2k}, 0): x_j = \pm 10^{-n} \delta_{j, l}, 1 \leq l \leq 2k\}$$
and the construction for $H_1$ goes through for $H_k$ with only minor changes.  In particular, the definition of $(x, \alpha)$ being a parent of $(y, \beta)$ would now require $y = xg$ where the first $2k$ components of $g$ all lie in 
$(-\frac{1}{2} 10^{-\alpha}, \frac{1}{2} 10^{-\alpha}]$ and the final component lies in $(-\frac{1}{2} \cdot 10^{-2\alpha}, \frac{1}{2} \cdot 10^{-2\alpha}]$.  

In this construction, the analogue to the unit cube in Euclidean space is 
the unique cube of scale $0$ containing the identity element; according to 
the notation defined in the preceding paragraph, the name for this cube is 
$Q(0, 0)$.
\medskip

\noindent
\textbf{Remark}:  In making this decomposition we are saying 
nothing about the boundaries of the elements of the $\mathcal{Q}_\alpha$ other than 
that they are closed sets of Hausdorff measure zero in the 
appropriate dimension.  Also, this decomposition is not the same as the 
decomposition of the Heisenberg group found in \cite{Str}.

\subsubsection{Orthogonal decomposition of $L^2$}\label{sec232}

Looking back at Euclidean space $\textbf{R}^n$ for inspiration, we note 
that the Hilbert space $L^2([0, 1]^n)$ of square-integrable functions on 
the unit cube can be decomposed into orthogonal subspaces as follows:  if $\beta$ is a positive integer, we define $C_\beta \subset L^2([0, 1]^n)$ as

$$\{f \in L^2([0, 1]^n):  f|_Q \textnormal{ is constant for } Q \in \mathcal{Q}_\beta  \textnormal{ and } \int_Q f = 0 \textnormal{ for } Q \in \mathcal{Q}_{\beta - 1}\}$$

\noindent
while $C_0 \subset L^2([0, 1]^n)$ is defined as 
$$\{f \in L^2([0, 1]^n):  f \textnormal{ is constant}\}.$$
\noindent
This yields the orthogonal decomposition

$$L^2([0, 1]^n) = \oplus_{\beta = 0}^\infty C_\beta.$$
\noindent
In other words, if $f \in 
C_\beta, g \in C_\gamma$ 
with $\beta \neq \gamma$, $\int_{[0, 1]^n} fg = 0$ while for each $h \in 
L^2([0, 1]^n)$ there exists $h_\beta \in C_\beta$ for $\beta$ a 
nonnegative integer with
$$h = \Sigma_{\beta = 0}^\infty h_\beta$$ 
with the sum in question converging in $L^2([0, 1]^n)$ to $h$.

For the Heisenberg groups we can mimic this procedure as follows:  
here, 
our `base' cube shall be denoted as $Q(0, 0)$ where the first zero 
denotes the origin and the second zero denotes scale.  Similarly, we 
define the $C_\beta$ (as subspaces of the Hilbert space $L^2(Q(0, 0))$ of 
real-valued, square-integrable functions) identically to the way we did 
with Euclidean space.  In other words, if $\beta$ is a positive integer, we define $C_\beta \subset L^2(Q(0, 0))$ as

$$\{f \in L^2(Q(0, 0)):  f|_Q \textnormal{ is constant for } Q \in \mathcal{Q}_\beta  \textnormal{ and } \int_Q f = 0 \textnormal{ for } Q \in \mathcal{Q}_{\beta - 1}\}$$

\noindent
while $C_0 \subset L^2(Q(0, 0))$ is defined as 
$$\{f \in L^2(Q(0, 0)):  f \textnormal{ is constant}\}.$$
\noindent
This yields the orthogonal decomposition

$$L^2(Q(0, 0)) = \oplus_{\beta = 0}^\infty C_\beta.$$

For $\beta > 0$, $C_\beta$ has a spanning set consisting of the functions 
$f_{Q, Q'}$ for $Q, Q'$ sibling cubes in $\mathcal{Q}_\beta$ 
defined as follows:  $f_{Q, Q'}$ is equal to $1$ on $Q$, $-1$ on $Q'$, and 
$0$ everywhere else; we shall call this spanning set $S_\beta$.  $S_\beta$ 
is \textbf{approximately orthogonal} in the following sense:
there exists some universal constant $K$ (independent of $\beta$) such 
that for each $f \in S_\beta$ we have

$$\#\{g \in S_\beta:  \int_{Q(0, 0)} fg \neq 0\} \leq K$$
\noindent where the $\#$ symbol denotes cardinality.

When we proceed to the proof, we will wish to find a fixed $Y > 0$ such that if 
$g, g' \in Q(0, 0)$ with $g \neq g'$, there exists some dyadic cube $Q$ such that

$$\textnormal{ diam }(Q) < Y d_{CC}(g, g') \textnormal{ and } g, g' \in Q.$$
This is arranged by considering not just the cube families $\mathcal{Q}_\alpha$ 
discussed in the previous section but expanding each cube family $\mathcal{Q}_\alpha$ to a larger family $\mathcal{Q}'_\alpha$.

If $\alpha > 0$, we define $\mathcal{Q}'_\alpha$ to consist of the the cubes of the form
$$\{gQ:  Q \in \mathcal{Q}_\alpha, g \in B_{\alpha + 2}\};$$ 
remember that $B_{\alpha + 2}$ was defined in the previous subsection as the discrete Heisenberg group of scale $\alpha + 2$.

This does not cause the number of dyadic cubes of a given scale to multiply unreasonably because writing $g \in B_{\alpha + 2}$ in coordinate form as $(z_1, \dots, z_n, t)$, every element of $\mathcal{Q}'_\alpha$ can be written as $gQ$ for some $Q \in \mathcal{Q}_\alpha$ and $g \in B_{\alpha + 2}$ with
$$z_1, \dots, z_n \in [-10^{-\alpha}, 10^{-\alpha}] \textrm{ and } t \in [-10^{-2\alpha}, 10^{-2\alpha}].$$
Letting $L_{g}$ denote left translation by $g$ whenever $g \in H^k$, we then define

$$C'_\beta = \{f \circ L_{g^{-1}}:  f \in C_\beta, g \in B_{\beta + 2}\}.$$
In fact, writing $g \in B_{\beta + 2}$ in coordinate form as $(z_1, \dots, z_n, t)$, every element of $C'_\beta$ can be written as $f \circ L_{g^{-1}}$ for some $f \in C_\beta$ and $g \in B_{\beta + 2}$ with
$$z_1, \dots, z_n \in [-10^{-\beta}, 10^{-\beta}] \textrm{ and } t \in [-10^{-2\beta}, 10^{-2\beta}].$$
Fixing $\beta$, we can construct an approximately 
orthogonal basis for $C'_\beta$ analogously to the way we did 
for each $C_\beta$:  we simply construct an approximately orthogonal basis for $C_\beta \circ L_{g^{-1}}$ for each $g$ separately.

Finally, for both Euclidean space and the Heisenberg group, it is 
occasionally 
necessary to work with sets on a slightly larger scale than the unit 
cube.  To do this, one fixes some integer $k \leq 0$, denotes our base 
cube to be the cube of scale $k$ which contains $Q(0, 0)$, and then 
defines $C_\beta$ and $C'_\beta$ appropriately for $\beta \leq 0$ (for 
example, $C_k$ will consist of the constant functions on our new base 
cube here).

\subsubsection{Differentiability}\label{sec233}
On the Euclidean unit cube, there exists a Jacobian map that sends each 
Lipschitz function $f$ (which may be either scalar-valued or Euclidean 
vector-valued) on the unit cube to the almost-everywhere-defined 
function $Jf$, the Jacobian of $f$.  
At almost every point, the Jacobian is a linear map from 
the tangent space of the domain to the tangent space of the image.  
Further, the partial derivative 
of each component is bounded above by the Lipschitz coefficient of $f$.  
Finally, a Lipschitz function $f$ with almost everywhere constant Jacobian 
defined on a 
connected open set is uniquely determined by this Jacobian and its value 
at a single point:  if $Jf$ is equal to the linear map $T$ almost 
everywhere and $f(x_0) = y_0$ then

$$f(x) = T(x - x_0) + y_0 \textrm{ for all } x \textrm{ where } f(x) 
\textrm{ is defined.}$$

Similarly, if $G$ and $H$ are two Heisenberg groups and  $F:  G 
\rightarrow H$ is Lipschitz, then by \cite{P} the Pansu differential $DF$ (which, for almost every $g \in G$ induces a map $DF(g):  G \rightarrow H$) satisfies these three properties:

\bigskip

\noindent (i) At almost every $g \in G$, the differential of the Lie group map $DF(g)$ at the identity induces a Lie 
algebra homomorphism from the tangent space of $G$ at $g$ to the tangent space of $H$ at $F(g)$.

\bigskip

\noindent (ii) The magnitude of each component of $DF$ is bounded 
above (up to a constant depending on normalization) by the Lipschitz coefficient of 
$F$.   

\bigskip

\noindent (iii) If for almost every $g$ with respect to Haar 
measure on $G$, $DF(g)$ (which was defined as an $H$-valued function 
defined on $G$) is equal to the Lie group homomorphism $\phi:  G 
\rightarrow H$ and $g_0 \in G$, $h_0 \in H$ with $F(g_0) = h_0$ 
then

$$F(g) = h_0 \phi(g_0^{-1} g) \textrm{ for all } g \textrm{ where } F(g) 
\textrm{ is defined.}$$

Of the properties, only (iii) is not a simple consequence of 
\cite{P}.  However, (iii) is a direct consequence of the following fact concerning 
uniqueness of Lipschitz maps:

\begin{fac}
Suppose $G$ and $H$ are Carnot groups, $U \subset G$ is connected and 
open, $g_0 \in U$ and $F_1:  U \rightarrow G$ and $F_2:  U \rightarrow G$ 
are two Lipschitz maps such that $DF_1(g) = DF_2(g)$ for almost all $g \in 
U$ with respect to Haar measure and $F_1(g_0) = 
F_2(g_0)$.  Then $F_1 = F_2$.
\end{fac}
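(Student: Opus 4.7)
The plan is to show that the locally Lipschitz map $\psi: U \to H$ defined by $\psi(g) = F_2(g) F_1(g)^{-1}$ is locally constant on $U$. Since $\psi(g_0) = e_H$ by hypothesis and $U$ is connected, this forces $\psi \equiv e_H$, which is exactly $F_1 = F_2$.

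To prove local constancy, fix $g \in U$, a horizontal vector $X \in \mathfrak{g}_1$, and a small transverse slice $\Sigma$ through $g$. Let $E \subset U$ be the full measure set on which both Pansu differentials exist and agree. Fubini applied to $U \setminus E$ along the foliation $(p,t) \mapsto p\exp(tX)$ shows that for almost every $p \in \Sigma$, the integral curve $\widetilde\gamma_p(t) = p\exp(tX)$ meets $U\setminus E$ in a one dimensional null set. Along such a generic $\widetilde\gamma_p$, the curves $c_i(t) := F_i(\widetilde\gamma_p(t))$ are absolutely continuous into $H$, and by unpacking the definition of the Pansu differential in the horizontal direction $X$ they satisfy the chain rule $c_i(t)^{-1} \dot c_i(t) = DF_i(\widetilde\gamma_p(t))(X)$ almost everywhere. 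Since $DF_1=DF_2$ along the line, both $c_1$ and $c_2$ are absolutely continuous solutions of the same left invariant ODE $\dot c(t) = c(t)\xi(t)$ with $\xi(t)\in\mathfrak{h}_1$. Left invariance implies that any two solutions differ by left multiplication of their initial values, so $c_2(t)c_1(t)^{-1} = c_2(0)c_1(0)^{-1}$, i.e.\ $\psi(\widetilde\gamma_p(t))=\psi(p)$ for all $t$. Letting a sequence of generic $p\to g$ and invoking continuity of $F_1$ and $F_2$ yields $\psi(g\exp(tX)) = \psi(g)$ for all sufficiently small $t$. Running the same procedure at each intermediate point for each $X$ in a basis of $\mathfrak{g}_1$, and using Chow's theorem so that finite products $g\exp(t_1 X_{i_1})\cdots\exp(t_k X_{i_k})$ sweep out a neighborhood of $g$ inside $U$, one sees $\psi$ is constant on a whole neighborhood of $g$.

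The main obstacle will be justifying the chain rule $c_i(t)^{-1}\dot c_i(t) = DF_i(\widetilde\gamma_p(t))(X)$ almost everywhere along a generic curve, since Lipschitz maps between Carnot groups need not be classically differentiable. I would handle this by combining Pansu's almost everywhere differentiability theorem on $U$ with a one dimensional Fubini and Rademacher argument in the $t$ variable, so that at almost every $p\in\Sigma$, for almost every $t$ both Pansu differentiability of $F_i$ at $\widetilde\gamma_p(t)$ and classical differentiability of $c_i$ at $t$ hold, and the definition of the Pansu differential then forces the stated identity on the complement. Once this chain rule is in hand, the rest is standard left invariant ODE bookkeeping together with Chow's theorem and a continuity argument.
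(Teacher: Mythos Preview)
Your proposal is correct and takes essentially the same approach as the paper: form the quotient $\psi = F_2 F_1^{-1}$, use a Fubini argument to find generic horizontal lines along which the relevant derivatives agree almost everywhere, integrate along those lines to see that $\psi$ is constant there, and finish by connectivity. The paper is terser --- it asserts $DZ = 0$ a.e.\ for $Z = F_2 F_1^{-1}$ and integrates along a single translated piecewise-horizontal curve from $g_0$ to an arbitrary point --- whereas you keep $F_1$ and $F_2$ separate via the left-invariant ODE and argue locally via Chow before invoking connectedness, but this is a difference in packaging rather than substance.
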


\begin{proof}
Suppose there exists $u \in U$ with $F_1(u) \neq F_2(u)$.  Fix $\epsilon > 0$ such that 
$$d_{CC}(F_1(u), F_2(u)) > \epsilon.$$
Letting $\gamma$ be a piecewise horizontal curve in $U$ joining $g_0$ to $u$ we note that 
there exists $g' \in G$ sufficiently close to the identity such that the 
left translation of $\gamma$ by $g'$ lies in $U$ (which of course implies 
that $g'g_0, g'u \in U$) with $d_{CC}(F_1(g'g_0), F_2(g'u)) > \epsilon$ and 
almost everywhere on this translation, $DF_1 = DF_2$. However, integration 
then implies $$F_1(g' u) F_1(g' g_0)^{-1} = F_2(g' u) F_2(g' g_0)^{-1}.$$  Therefore, we know that
$$d_{CC}(F_1(g' u), F_2(g' u)) = d_{CC}(F_1(g' g_0), F_2(g' g_0)) > \epsilon;$$ 
as $g'$ can be made arbitarily close to the identity this gives us that 
$$\epsilon \leq d_{CC}(F_1(u), F_2(u)) = d_{CC}(F_1(g_0), F_2(g_0)) = 0$$ 
producing a contradiction, so we conclude that $F_1 = F_2$ as desired.
\end{proof}

In fact, because each linear map $\psi$ from the horizontal component of 
$G$ to the horizontal component of $H$ has at most one extension (which 
we call $\tilde{\psi}$) to a Lie 
group homomorphism from $G$ to $H$, we can go one step further and say 
that if $MF$ is equal to the linear map $\psi$ almost everywhere and $g_0 
\in G$, $h_0 \in H$ with $F(g_0) = h_0$ then

$$F(g) = h_0 \tilde{\psi}(g_0^{-1} g)$$ 
for all $g$ where $F(g)$ is defined.

\subsubsection{Weak convergence}\label{sec234}
If a sequence $f_n$ of uniformly Lipschitz functions on a bounded 
Euclidean region converges uniformly to some function $f$ then $f$ 
is Lipschitz, and moreover the Jacobians $Jf_n$ converge weakly in $L^2$ 
to the Jacobian of $f$.  

In other words, we have the following fact:  
\begin{fac}
Let $U \subset \textbf{R}^k$ be a bounded open set and let $\{f_n\}:  U 
\rightarrow \textbf{R}^m$ be a sequence of uniformly Lipschitz functions 
which converges uniformly to the function $f:  U \rightarrow 
\textbf{R}^m$.  If $g:  U \rightarrow \textbf{R}$ is an $L^2$ function and $D$ represents partial 
differentiation with respect to a fixed vector in $\textbf{R}^k$ then
$$\int_U (Df_n) g \rightarrow \int_U (Df) g,$$
where the integrals are with respect to Lebesgue measure and the 
derivatives in question are defined almost everywhere.
\end{fac}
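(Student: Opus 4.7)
The plan is to prove the Lipschitz property of $f$ first and then establish weak $L^2$ convergence of $Df_n$ to $Df$ by combining a uniform $L^2$ bound with a test-function identification of the weak limit.

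First, I would show $f$ is Lipschitz with the same constant $L$ as the uniform bound on the $f_n$. For any $x, y \in U$, one has $|f_n(x) - f_n(y)| \leq L |x - y|$, and passing to the limit using pointwise (or uniform) convergence yields $|f(x) - f(y)| \leq L|x - y|$. Since $U$ is bounded, Rademacher's theorem guarantees that $Df$ exists almost everywhere and satisfies $|Df| \leq L$ a.e., so $Df \in L^\infty(U) \subset L^2(U)$; the same bound makes the sequence $\{Df_n\}$ uniformly bounded in $L^2(U)$.

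Next, I would identify the weak limit on a dense subspace. For any test function $\varphi \in C_c^\infty(U)$, integration by parts (which is valid because $f_n$ is Lipschitz, hence absolutely continuous on lines) gives
$$\int_U (Df_n)\,\varphi \, dx = -\int_U f_n \, (D\varphi) \, dx.$$
Uniform convergence $f_n \to f$ on the compact support of $\varphi$, together with the boundedness of $D\varphi$, yields
$$\int_U f_n \, (D\varphi) \, dx \longrightarrow \int_U f \, (D\varphi) \, dx = -\int_U (Df)\,\varphi \, dx,$$
where the last equality again uses Rademacher together with integration by parts applied to $f$. Hence $\int_U (Df_n)\,\varphi \to \int_U (Df)\,\varphi$ for every $\varphi \in C_c^\infty(U)$.

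Finally, I would extend this to arbitrary $g \in L^2(U)$ by a standard density argument that exploits the uniform $L^2$ bound. Given $\epsilon > 0$, pick $\varphi \in C_c^\infty(U)$ with $\|g - \varphi\|_{L^2} < \epsilon$, and split
$$\int_U (Df_n - Df)\,g = \int_U (Df_n - Df)\,\varphi + \int_U (Df_n - Df)(g - \varphi).$$
The first term goes to zero by the test-function case, while the second is bounded by $\|Df_n - Df\|_{L^2} \cdot \epsilon \leq 2L\sqrt{|U|}\,\epsilon$ via Cauchy-Schwarz. Since $\epsilon$ is arbitrary, $\int_U (Df_n)\,g \to \int_U (Df)\,g$, as required. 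The only mild obstacle is making sure one does not merely extract a weakly convergent subsequence and leave the full-sequence conclusion on the table; uniqueness of the weak limit (forced by the test-function identification) ensures the whole sequence converges.
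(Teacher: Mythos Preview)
Your proof is correct and follows exactly the approach the paper sketches: approximate $g$ by a smooth compactly supported test function, integrate by parts, and use the uniform $L^2$ bound on the derivatives to pass from the dense subspace to all of $L^2(U)$. You have simply filled in the details the paper omits, including the useful observation that the test-function identification pins down the weak limit uniquely and hence forces convergence of the full sequence rather than just a subsequence.
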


As will be stated shortly, Fact 2.9 generalizes to Heisenberg groups when the map 
$MF$ induced by the Pansu differential (see the definitions section) is 
used in place of the Jacobian.  In particular, one notes that because $MF$ consists of derivatives 
of horizontal components of $F$ with respect to horizontal tangent 
vectors, $MF$ can be viewed as an array of horizontal derivatives 
of real-valued Lipschitz functions (after postcomposing with the 
appropriate coordinate functions).  Then, the weak convergence in 
question is the following fact:

\begin{fac}
Let $U \subset H_k$ be a bounded open set and let $\{f_n\}:  U 
\rightarrow H_m$ be a sequence of uniformly Lipschitz functions which 
converges uniformly to the function $f:  U \rightarrow H_m$.  If $g:  U 
\rightarrow \textbf{R}$ is an $L^2$ function (with respect to Haar measure) and $D$ 
represents partial differentiation with respect to a fixed left-invariant 
horizontal vector field in $H_k$ then $$\int_U (Df_n) g \rightarrow 
\int_U (Df) g,$$ where the integrals are with respect to Haar measure 
and the derivatives in question are defined almost everywhere.
\end{fac}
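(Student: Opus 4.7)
The plan is to mimic the Euclidean proof of Fact 2.11: reduce to the real-valued scalar case, approximate $g$ in $L^2$ by smooth compactly supported test functions, and then finish by integration by parts. The ingredient specific to the Heisenberg setting is that left-invariant horizontal vector fields are divergence-free with respect to Haar measure.

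First I would reduce to real-valued targets. As remarked just before the statement, every entry of $Mf_n$ is the horizontal derivative of a real-valued function of the form $\pi \circ f_n$, where $\pi$ is one of the horizontal coordinate projections $H_m \to \textbf{R}$. Since such a projection is $1$-Lipschitz with respect to $d_{CC}$, the functions $\pi \circ f_n$ are uniformly Lipschitz and converge uniformly to $\pi \circ f$, so it suffices to prove the assertion with $f_n, f : U \to \textbf{R}$. Next I would reduce to smooth $g$. Since $f$ is the uniform limit of uniformly Lipschitz functions it is itself Lipschitz, so by Pansu's theorem $Df_n$ and $Df$ exist almost everywhere and are uniformly bounded by the common Lipschitz constant $L$. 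If $g_\epsilon \in C_c^\infty(U)$ satisfies $\|g - g_\epsilon\|_{L^2(U)} < \epsilon$, Cauchy-Schwarz gives
$$\left|\int_U (Df_n)(g - g_\epsilon)\right| \leq L |U|^{1/2} \epsilon$$
uniformly in $n$, and likewise for $f$. It therefore suffices to prove the convergence for each fixed $g_\epsilon \in C_c^\infty(U)$.

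For the main step, I would use that in exponential coordinates Haar measure on $H_k$ is Lebesgue measure on $\textbf{R}^{2k+1}$, and every left-invariant horizontal vector field $D$ has vanishing Euclidean divergence (a direct check on the explicit formulas $\partial_x + y \partial_t$, $\partial_y - x \partial_t$ and their higher-dimensional analogues). For $\phi \in C_c^\infty(U)$ and any Lipschitz $h$ on $U$, integration by parts then yields
$$\int_U (Dh)\, \phi = -\int_U h\, (D\phi).$$
Applying this with $\phi = g_\epsilon$ to $h = f_n$ and $h = f$ separately and subtracting gives
$$\int_U (Df_n - Df)\, g_\epsilon = -\int_U (f_n - f)(Dg_\epsilon),$$
which tends to $0$ by the uniform convergence of $f_n$ to $f$ on the compact support of $g_\epsilon$ combined with the boundedness of $Dg_\epsilon$.

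I expect the main obstacle to be justifying this integration-by-parts identity when $h$ is merely Lipschitz rather than smooth. The cleanest route is via Euclidean mollification: set $h_\delta = h \ast \rho_\delta$ in the ambient coordinates, so that $h_\delta$ is smooth and the classical identity holds because $D$ has vanishing divergence. One then lets $\delta \to 0^+$, using $h_\delta \to h$ uniformly on compact subsets of $U$ to handle the right-hand side, and on the left-hand side using $\|Dh_\delta\|_\infty \leq L$ together with the Lebesgue differentiation theorem to conclude $Dh_\delta \to Dh$ almost everywhere, after which dominated convergence closes the argument. Equivalently, one may simply invoke that the Pansu horizontal derivative of a Lipschitz function agrees almost everywhere with its distributional derivative along $D$, which reduces the integration by parts to a definition. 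Once the identity is in hand, the rest of the proof is routine.
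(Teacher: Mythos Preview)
Your proposal is correct and follows exactly the approach the paper itself takes: the paper's proof is the one-line sketch ``Approximate $g$ by a sufficiently smooth test function with compact support and integrate by parts,'' and you have simply fleshed out this outline, including the observation that left-invariant horizontal fields are divergence-free with respect to Haar measure. The only point worth flagging is that the Euclidean-mollification route to the integration-by-parts identity is slightly delicate (since $D$ has variable coefficients, $D(h*\rho_\delta)$ is not $(Dh)*\rho_\delta$), but your alternative of invoking that the Pansu horizontal derivative coincides a.e.\ with the distributional derivative along $D$ disposes of this cleanly.
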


Facts 2.9 and 2.10 have the same classical proof, whose sketch is below:

\begin{proof}
(Sketch) Approximate $g$ by a sufficiently smooth test function with 
compact support and integrate by parts.
\end{proof}

\subsubsection{Lipschitz extension}\label{sec235}
If $A$ is a subset of the unit cube of $\textbf{R}^n$ and $f$ is a Lipschitz function from 
$A$ to some Euclidean space, then $f$ can be extended to a Lipschitz 
function on the entire unit cube (or, in fact, to all of $\textbf{R}^n$ 
for that matter).  It is not known whether this extension property also holds for 
maps from a subset of a Heisenberg group $G$ into the same group $G$ (see 
\cite{Ba} and \cite{Br}), and for that reason we assume the Lipschitz map 
in Corollary 2.17 below is defined on an open subset of $G$.  It has been shown recently in \cite{Ba} that this extension property does not hold for maps from $\textbf{R}^k$ to $H_n$ with $n < k$.  Also, \cite{RW} shows that the property does not hold for maps from $\textbf{R}^k$ to any jet space on $H_n$ whenever $n < k$. However, 
this property does hold for maps from any Carnot group to any $\textbf{R}^k$.  It also holds for maps from $\textbf{R}^2$ to $H_n$ for $n \geq 2$, as was shown in \cite{F} and \cite{M}.  More generally, based on recent results in \cite{WY} the Lipschitz extension property holds for maps from any set with Assouad-Nagata dimension less than or equal to $n$ to any jet space group on $\textbf{R}^n$.  Notably, this implies the Lipschitz extension property for maps from $H_k$ to $H_{2k + 1}$.

 \subsection{General Carnot groups}\label{sec24}

In this subsection we seek to explain how the constructions performed in Section 2.3 on the Heisenberg group can be generalized to work on other Carnot groups.

First we note that the decomposition in Section 2.3.1 relied on passing from the Heisenberg group to a discrete version.  To this end, we introduce the following definition:

\begin{defi}
Let $G$ be a Carnot group whose Lie algebra $\mathfrak{g}$ is graded as 
$$\mathfrak{g} = \oplus_{j = 1}^d \mathfrak{g}_j.$$
Write $m_j$ as the vector space dimension of $\mathfrak{g}_j$ for $1 \leq j \leq d$.
We say that $G$ is $\textbf{discretizable}$ if for $1 \leq j \leq d$ there exist collections
$$\{X_{(j, i)}\}_{i = 1}^{m_j} \in \mathfrak{g}_j,$$ 
$$\{g_{(j, i)}\}_{i = 1}^{m_j} \in G,$$ 
and subgroups
$$H_j \leq G$$
such that 
$$\{X_{(j, i)}\}_{i = 1}^{m_j} \textit{ spans } \mathfrak{g}_j \textit{ as a vector space,}$$
$$\exp(X_{(j, i)}) = g_{(j, i)},$$
$$H_j = \langle \{g_{(j', i)}\}_{1 \leq i \leq m_{j'}, j \leq j' \leq d} \rangle,$$ 
and writing $G' = \langle \{g_{(1, i)}\}_{i = 1}^{m_1} \rangle$ and $G_j = \langle \{\exp(\mathfrak{g}_{j'})\}_{j' \geq j} \rangle,$
$$G' \textit{ is a discrete subgroup with } G' \cap G_j = H_j.$$
 
\noindent In this setting, we say that $G'$ is the \textbf{discretization} of $G$.

\end{defi}

Examples of discretizable Carnot groups include Heisenberg groups, Euclidean spaces, and jet spaces.  For example, we can take the discrete Heisenberg groups as the discretization of the Heisenberg groups.

If $G$ is discretizable, then the method in \cite{Ch} can be followed as in Section 2.3.1 to create a dyadic 
decomposition, with $B_0$ now defined to be the discretization $G'$.  Although the scaling constant used to create $B_n$ from $B_0$ (which was $10^{-n}$ in the case of Heisenberg groups) depends on the specific Carnot group itself (in particular, it depends on the relationship between the 
coordinates of an arbitrary point $g$ and $d_{CC}(g, 0)$; cf Theorem 
2.10 in \cite{Mo}), the procedure for Heisenberg groups can otherwise be 
copied exactly to create a dyadic decomposition for $G$ into dyadic ``cubes''.   Because the base cube for our construction will still 
be a cube based at the origin of scale zero, we can still refer to it as 
$Q(0, 0)$.  With our new dyadic decomposition in hand, we can also copy the construction of the $C_\beta$ and $C'_\beta$ in Section 2.3.2 in the setting of our discretizable group $G$.

Finally, we observe that the results in Sections 2.3.3 and 2.3.4 (which involved differentiability and weak convergence) used no properties specific to Heisenberg groups.  Therefore, the results in Section 2.3.3 and 2.3.4 carry over just as well to maps from one Carnot group to another.  Actually, Fact 2.8 in Section 2.3.3 was already stated and proved in terms of Carnot groups.

\subsection{Proof of main theorem}\label{sec25}
In what follows, $H^k$ and $h^k$ shall refer to Hausdorff $k$-dimensional 
measure and Hausdorff $k$-dimensional content, respectively (both of 
which we define with respect to Carnot-Carathe\'odory 
distance).

Our goal is to prove the following theorem.
\begin{teo}
Let $G$ be a discretizable Carnot group of homogeneous dimension $k$ and let $H$ be another 
Carnot group.  Suppose $F:  Q(0, 0) \subset G \rightarrow H$ is Lipschitz.  
If $\delta > 0$, there exists a positive 
integer $N$ and subsets $Z, X_1, \dots, X_N$ of $Q(0, 0)$ such that 
$$h^k(F(Z)) < \delta,$$ $$Z \cup X_1 \cup \dots \cup X_N = Q(0, 0),$$ and $F$ 
is biLipschitz on each $X_i$.  Furthermore, $N$ and the biLipschitz 
coefficients of the $F|X_i$ depend only on the groups 
$G$ and $H$, $\delta$, and the Lipschitz coefficient of $F$, and not on the map $F$ itself.
 \end{teo}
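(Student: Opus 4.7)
The plan is to adapt the wavelet-based proof of David in \cite{D2}, using the five properties established in Sections 2.3 and 2.4 in place of their Euclidean counterparts.

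First I would expand each scalar entry of the matrix-valued Pansu map $MF$ in the approximately orthogonal system $\{f_{Q,Q'}\}$ provided by the $C_\beta$ spaces, together with the shifted families $C'_\beta$ so that any two points in $Q(0,0)$ lie in a common dyadic cube of comparable diameter. For each dyadic cube $Q$ let $\gamma(Q)^2$ denote the sum over sibling pairs at the scale of $Q$ of the squared wavelet coefficient of $MF$ against the corresponding $f_{Q,Q'}$. By Fact 2.10(ii), $MF$ is bounded a.e.\ in terms of the Lipschitz constant of $F$, so the approximate orthogonality yields a Carleson-type bound
$$\sum_{Q' \subset Q} \gamma(Q')^2 \leq C \, |Q|$$
for every dyadic $Q$, with $C$ depending only on the Lipschitz constant of $F$, $G$, and $H$.

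Next I would carry out a Calder\'on--Zygmund stopping-time argument. Fix a small threshold $\epsilon>0$; declare a cube \emph{stopping} once either the accumulated sum of $\gamma(Q')^2$ along its ancestors $Q' \supseteq Q$ first exceeds $\epsilon$, or once the cube average of $MF$ departs by more than $\epsilon$ from its value on the parent. The Carleson bound combined with iteration shows that the collection of top-generation stopping cubes has total $k$-measure $O(\epsilon^{-1})|Q(0,0)|$, and iterating through generations the set $Z$ of points stopped at every scale has $h^k(Z)$, and hence $h^k(F(Z))$, arbitrarily small. On each ``corona'' region $R$ between successive stopping levels, the averaged Pansu differential stays within $\epsilon$ of a fixed linear horizontal map $\psi_R$. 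Applying the weak-convergence Fact 2.12 to dyadic rescalings of $F$ on $R$ and invoking the uniqueness statement immediately following Fact 2.10 (which says $\psi_R$ has a unique extension $\tilde\psi_R$ to a Lie group homomorphism $G\to H$), one concludes that $F|_R$ agrees, up to a uniform error of $O(\epsilon)\cdot\mathrm{diam}(R)$ in the quasidistance $d$, with a left translate of $\tilde\psi_R$.

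Now pigeonhole: $\|\psi_R\|$ is uniformly bounded by the Lipschitz constant of $F$, so the set of possible $\psi_R$'s lies in a compact set of linear maps; covering it by finitely many $\epsilon$-balls groups the corona regions into finitely many families $F_1,\ldots,F_N$, with $N$ depending only on the data. On each family whose associated $\tilde\psi$ is injective with a quantitative lower bound, the almost-equality $F\approx \tilde\psi$ upgrades to a genuine bi-Lipschitz estimate with constants controlled only by the mesh. Families whose $\tilde\psi$ is degenerate on the horizontal layer map under $F$ into an $O(\epsilon)$-neighborhood of a proper homogeneous subgroup of $H$ of homogeneous dimension strictly less than $k$, so after shrinking $\epsilon$ the $k$-dimensional Hausdorff content of their image is negligible and they can be absorbed into $Z$.

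The main obstacle will be simultaneously controlling the Carleson sum, the pigeonhole count $N$, and the garbage estimate $h^k(F(Z))<\delta$ with a single choice of $\epsilon$. A further subtlety is that the dyadic scaling factor of Section 2.3.1 depends on $G$ (cf.~Theorem 2.10 of \cite{Mo}) and the approximate-orthogonality constants depend on the homogeneous dimension, so these dependencies must be tracked carefully so that the final $N$ and bi-Lipschitz constants depend only on the data and not on the individual $F$. Promoting weak-$L^2$ closeness of $MF$ to $\psi_R$ into uniform closeness of $F$ to $\tilde\psi_R$ requires a Poincar\'e-type inequality on Carnot groups combined with the uniqueness Fact 2.9; this is where the horizontal-derivative structure and the existence of the extension $\tilde\psi$ are essential.
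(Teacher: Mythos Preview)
Your approach diverges from the paper's, and the point where it departs is also where it breaks.

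The paper does not run a corona decomposition on $MF$ and then pigeonhole on the approximate derivative $\psi_R$.  Instead it proves (Proposition~2.16) by an Arzel\`a--Ascoli compactness argument that whenever two semi-adjacent cubes $Q,Q'$ of scale $\alpha$ have \emph{large} image content but their images are \emph{too close} (i.e.\ $F$ fails bi-Lipschitz at that scale and location), some wavelet coefficient of $MF$ at a nearby scale is bounded below.  Approximate orthogonality then gives a Carleson packing of such ``bad pairs'' (Proposition~2.17), and the finitely many pieces are produced by Jones' binary-string coding: one walks down the tree and, each time a bad pair $(Q,Q')$ is encountered, appends distinct digits so that the two cubes end up in different sets.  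The resulting sets $X_w$ are bi-Lipschitz precisely because no two of their points can sit in a bad semi-adjacent pair at \emph{any} scale.

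Your pigeonhole step does not achieve this.  Even if $F|_{R}$ is $O(\epsilon)\,\mathrm{diam}(R)$-close to a left translate $h_R\,\tilde\psi_R(g_R^{-1}\cdot)$ on each corona region $R$, the constants $h_R,g_R$ depend on $R$.  Grouping together all corona regions whose $\psi_R$ lies in a common $\epsilon$-ball tells you nothing about how these translates match up across regions; two far-apart corona pieces with the same $\tilde\psi$ can have images that overlap (already in $\mathbf{R}\to\mathbf{R}$: take $F=x$ on $[0,1]$ and $F=x-2$ on $[2,3]$).  So the conclusion ``$F\approx\tilde\psi$ upgrades to a genuine bi-Lipschitz estimate'' is false on the \emph{union} of the family; it only holds on each corona piece individually, and there are infinitely many of those.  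What is missing is exactly a mechanism that separates points whose images collide at some scale, which is what the paper's semi-adjacent-pair labeling provides and what the compactness Proposition~2.16 is designed to feed.

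A secondary issue: your invocation of Fact~2.12 (weak convergence) ``to dyadic rescalings of $F$ on $R$'' does not match how that fact is stated or used.  In the paper, Fact~2.12 is applied inside the contradiction argument of Proposition~2.16 to a \emph{sequence} of rescaled maps $F_n$ converging uniformly, in order to conclude that the limit has constant $MF$ on a fixed cube; it is not a Poincar\'e-type statement about a single $F$.
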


Before beginning the proof, we shall introduce two notions of nearness.

\begin{defi}
Suppose $Q(x, \alpha)$ and $Q(y, \alpha)$ are elements of the 
decomposition from \ref{sec231} of a discretizable Carnot group into cubes 
of the same scale.  We say that $Q(x, \alpha)$ and $Q(y, \alpha)$ 
are \textbf{adjacent} if the distance from $Q(x, \alpha)$ to $Q(y, 
\alpha)$ is bounded above by the diameter of $Q(x, \alpha)$.
\end{defi}
\noindent Note that two coincident cubes of the same scale are considered adjacent.

\begin{defi}

Suppose $Q(x, \alpha)$ and $Q(y, \alpha)$ are elements of the 
decomposition of a discretizable Carnot group into cubes of the same scale.  
We say that $Q(x, \alpha)$ and $Q(y, \alpha)$ are 
\textbf{semi-adjacent} if $Q(x, \alpha)$ and $Q(y, \alpha)$ are not 
adjacent and the parents of $Q(x, \alpha)$ and $Q(y, \alpha)$ are not 
adjacent, but the grandparents of $Q(x, \alpha)$ and $Q(y, \alpha)$ are 
adjacent.
\end{defi}

Turning to the proof of Theorem 2.12, we begin by establishing some further notation and normalizations.  

Let $E$ be the ratio of the diameter of an arbitrary ``cube'' to the diameter of one of its ``children'' using 
Carnot-Carath\'eodory distance.  For example, if $G$ is a Heisenberg group (using exactly the 
``cube'' decomposition from Section 2.3.1), then $E = 10$.

Using the Carnot-Carath\'eodory distance, we set
$$\theta = \textnormal{diam}(Q(0, 0)).$$

Also, we let $0 < L_1 < L_2 < \infty$ be constants such that if $Q$ and $Q'$ are semi-adjacent, there exist constants $0 < L_1 < L_2 < \infty$ 
$$L_1 \textnormal{diam}(Q) < d(Q, Q') < L_2 \textnormal{diam}(Q).$$
\noindent
We note that $L_1$ and $L_2$ only depend on $G$, not $Q$ or $Q'$.

In addition, we may assume that $F$ is 1-Lipschitz and that there exists $\eta > 0$ such that $F$ is defined on 
the dilation $\delta_{1 + \eta}Q(0, 0)$. For convenience 
we scale Hausdorff measure so that $|Q(0, 0)| = 1$ where $|S|$ denotes the Hausdorff measure of $S$.

Finally, we let $W$ be a positive integer 
such that every cube $Q'$ of scale $W - 10$ such that $Q' \cap Q(0, 0) 
\neq 0$ satisfies $Q' \subset \delta_{1 + \eta}Q(0, 0)$.  Throughout this proof, we will be focusing primarily on subcubes of $Q(0, 0)$ of scale at least $W$.

With our notation and normalizations set up, we prove the following proposition, which provides a 
partial wavelet decomposition of the linear map $MF$ induced by the Pansu 
differential $DF$ of $F$.

\begin{prop}
Suppose $1 \geq \epsilon > 0$.  There exists $n, C > 0$ 
such that if 
$\alpha \geq 
W$ and $Q := Q(a, \alpha)$ and $Q' := Q(b, \alpha)$ are semi-adjacent cubes with  
\begin{equation}\label{eqn1}
h^k(F(Q)) > \epsilon E^{-k\alpha}
\end{equation}
 and 
\begin{equation}\label{eqn2}
h^k(F(Q')) > \epsilon E^{-k\alpha}
\end{equation}
but
\begin{equation}\label{eqn3}
d_{CC}(F(Q), F(Q')) \leq \frac{1}{2} \epsilon L_1 \theta E^{-\alpha}
\end{equation}

\noindent
then there exists $\beta \in 
[\alpha - 4, \alpha + n]$ and $f_{Q, Q'} \in C'_\beta$
and integers $i, j$ such that

\begin{equation}\label{eqn4}
\frac{|\langle (MF)_{i,j}, f_{Q, Q'} \rangle |}{|\langle f_{Q, Q'}, f_{Q, 
Q'} \rangle |}  \geq C |Q|^{.5}.
\end{equation}

\noindent where $C'_\beta$ is the space defined in Section 2.3.2 and $MF$ 
is the matrix of horizontal components of the Pansu differential $DF$.

Further, $C$ only depends on $G$, $H$, and $\epsilon$ (and, in 
particular, not on the specific choice of $F$).
\end{prop}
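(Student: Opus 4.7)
The plan is to argue by contradiction and compactness, using the weak-convergence Fact 2.12 (in its Carnot-group form from Section 2.4) together with the uniqueness observation following Fact 2.9 to force the limiting map to be affine (a Lie-group homomorphism up to translation), then deriving a contradiction from the hypotheses and the normalization $|F(Q(0,0))| = K$. Suppose the proposition fails; then I can extract sequences $n_\nu \to \infty$ and $C_\nu \to 0$ together with $1$-Lipschitz maps $F_\nu$ satisfying $|F_\nu(Q(0,0))| = K$, scales $\alpha_\nu \geq W$, and semi-adjacent cubes $Q(a_\nu, \alpha_\nu), Q(b_\nu, \alpha_\nu)$ fulfilling \eqref{eqn1}--\eqref{eqn3}, yet with every quotient $|\langle (MF_\nu)_{i,j}, f\rangle|/\langle f, f\rangle$ (for $f \in C'_\beta$ with $\beta \in [\alpha_\nu - 4, \alpha_\nu + n_\nu]$, and every $i,j$) bounded by $C_\nu |Q(a_\nu, \alpha_\nu)|^{1/2}$.

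I next rescale by conjugating $F_\nu$ with left translation by $a_\nu^{-1}$ in $G$ and by $F_\nu(a_\nu)^{-1}$ in $H$, and then applying $\delta_{E^{\alpha_\nu}}$ on both sides. The result is a family of $1$-Lipschitz maps $\widetilde F_\nu$ on a common bounded neighborhood of unit-scale semi-adjacent cubes $\widetilde Q_a, \widetilde Q_b$. Since only finitely many configurations of semi-adjacent pairs of scale-$0$ cubes exist up to translation, a further subsequence fixes the configuration. In the rescaled frame, \eqref{eqn1}--\eqref{eqn3} become size-$\lesssim \epsilon K$ bounds, and the wavelet-coefficient smallness translates to $|\langle (M\widetilde F_\nu)_{i,j}, f\rangle|/\langle f, f\rangle < C_\nu$ for every $f \in C'_{\widetilde\beta}$ at rescaled scales $\widetilde\beta \in [-4, n_\nu]$. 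Arzel\`a--Ascoli then yields a subsequence with $\widetilde F_\nu \to F_\infty$ uniformly on compact sets, the limit being $1$-Lipschitz with $F_\infty(0)=0$.

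By the Carnot-group analogue of Fact 2.12 each $(M\widetilde F_\nu)_{i,j}$ converges weakly in $L^2$ on bounded sets to $(MF_\infty)_{i,j}$. Pairing against each fixed $f \in C'_{\widetilde\beta}$ with $\widetilde\beta \geq -4$ and using $C_\nu \to 0$ forces $\langle (MF_\infty)_{i,j}, f\rangle = 0$ for all $i,j$ and all such $f$. Since the grandparents of $\widetilde Q_a, \widetilde Q_b$ are adjacent, both cubes fit inside a single scale-$(-4)$ cube $Q^\dagger$; by the approximate-orthogonality/spanning statement of Section 2.3.2, the spaces $C'_{\widetilde\beta}$ for $\widetilde\beta \geq -4$ together with the constants on scale-$(-4)$ cubes span $L^2$ on each such cube. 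Hence every $(MF_\infty)_{i,j}$ is a.e.\ equal on $Q^\dagger$ to a constant $M_0$, and by the uniqueness statement following Fact 2.9, $F_\infty|_{Q^\dagger}$ agrees with the unique Lie-group homomorphism $\widetilde M_0 : G \to H$ extending $M_0$.

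To close the argument, I pass \eqref{eqn1}--\eqref{eqn3} to the limit via lower semicontinuity of Hausdorff content, giving $h^k(\widetilde M_0(\widetilde Q_a)) \leq \epsilon K$ and $d_{CC}(\widetilde M_0(\widetilde Q_a), \widetilde M_0(\widetilde Q_b)) \leq \epsilon K$. Semi-adjacency lower-bounds $d_{CC}(\widetilde Q_a, \widetilde Q_b)$ by a universal positive constant, so the proximity condition combined with the homomorphism structure of $\widetilde M_0$ forces $\|\widetilde M_0\|$ to be at most a universal multiple of $\epsilon K$. Comparing this bound with what $|F_\nu(Q(0,0))| = K$ requires of $\widetilde M_0$—after undoing the dilation and using that a full-rank affine map from $Q(0,0)$ with image of $h^k$-mass $K$ must have $\|\widetilde M_0\|^k \gtrsim K$, while a rank-deficient $\widetilde M_0$ collapses the image to $h^k$-measure zero, violating $K > 0$—yields a contradiction for $\epsilon$ small relative to the group constant $E$. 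The main obstacle is exactly this last step: reconciling the \emph{local} smallness that \eqref{eqn1}--\eqref{eqn3} impose on the limiting homomorphism with the \emph{global} mass normalization $K$. When $\alpha_\nu$ remains bounded the comparison is direct, but when $\alpha_\nu \to \infty$ the cube $Q(0,0)$ rescales to scale $-\alpha_\nu$, and one must use the full strength of $n_\nu \to \infty$—enforcing the affine behavior on a rescaled region large enough to capture the $K$-mass after unwinding the dilation—possibly combined with a covering-by-scale estimate to transfer the global lower bound down to $Q^\dagger$.
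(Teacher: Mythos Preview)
Your approach parallels the paper's through the compactness/weak-convergence step, but your closing contradiction is genuinely different and, as you yourself flag, incomplete. The gap is real: after rescaling by $E^{\alpha_\nu}$, the global normalization $|F_\nu(Q(0,0))|=K$ carries no information about $\widetilde F_\nu$ on the unit-scale window when $\alpha_\nu\to\infty$, and no amount of ``enforcing affine behavior on a large rescaled region'' recovers it, since the wavelet smallness you have only controls scales $\widetilde\beta\geq -4$.

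The paper closes the argument \emph{locally}, never invoking $|F(Q(0,0))|=K$. Observe first that the inequalities \eqref{eqn1}--\eqref{eqn2} are misprinted in the statement: as the paper's own proof (and the way the proposition is used in the theorem) make clear, they should read $h^k(F(Q(a,\alpha)))\geq\epsilon K E^{-k\alpha}$ and likewise for $Q(b,\alpha)$; in the theorem, cubes with \emph{small} image-content go into the garbage set $Z_\alpha$, and the proposition is applied only to pairs outside it. With the correct direction, the lower bound $h^k(F_n(Q(a,\alpha)))\geq\epsilon K E^{-k\alpha}$ passes to the limit via the inclusion of $F_n(Q)$ in an arbitrarily small neighborhood of $\overline{F(Q)}$, yielding $h^k(F(Q(a,\alpha)))\geq\epsilon K E^{-k\alpha}$ for the limiting affine map. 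On the other hand, semi-adjacency gives $d_{CC}(a',b')\geq E^{-\alpha}$ for all $a'\in Q(a,\alpha)$, $b'\in Q(b,\alpha)$, so \eqref{eqn3} (which \emph{does} pass to the limit by extracting convergent points $a'_n,b'_n$) forces the constant Pansu differential $\phi$ to have magnitude at most $\epsilon K$ in the direction from $a'$ to $b'$. Combined with the $1$-Lipschitz bound in the remaining directions and the Carnot change-of-variables formula, this gives
\[
h^k(F(Q(a,\alpha)))\leq |F(Q(a,\alpha))|\leq E^{-1}\epsilon K\cdot E^{-k\alpha},
\]
contradicting the lower bound just obtained.

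So the missing idea is that the contradiction comes from playing \eqref{eqn1} (corrected sign) against \eqref{eqn3} on the limit, not from the global mass $K$. Incidentally, your attempt to pass the \emph{upper} bound $h^k\leq\epsilon K$ through the limit ``via lower semicontinuity'' is in the wrong direction; it is lower bounds on content that survive uniform convergence here.
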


Also, the inner product in (4) is taken with respect to $L^2(G)$; it equals
$$\int_G (MF)_{i,j} f_{Q, Q'} d\mu$$ where $\mu$ is Haar measure on $G$ scaled so that $\mu(Q(0, 0)) = 1$.

We also note that the number of possible 
candidates for $f_{Q, Q'}$ for a given $Q$ is 
uniformly bounded, with a bound that depends only on the specific groups 
$G$ and $H$.

\vspace{12 pt}
\begin{proof}  Assume the contrary.  Then, for each $n$ there exists 
a 1-Lipschitz map $F_n$ and semi-adjacent cubes $Q(a_n, \alpha_n)$ and 
$Q(b_n, \alpha_n)$ such that

$$h^k(F_n(Q(a_n, \alpha_n))) > \epsilon E^{-k \alpha_n},$$

$$h^k(F_n(Q(b_n, \alpha_n))) > \epsilon E^{-k \alpha_n},$$

$$d_{CC}(F_n(Q(a_n, \alpha_n)), F_n(Q(b_n, \alpha_n))) < \frac{1}{2} \epsilon L_1 \theta 
E^{-\alpha_n}, $$

\noindent and 

$$\int_{Q(0, 0)} \psi f \leq 2^{-n} |Q(a_n, \alpha_n)|^{.5} 
||f||^2_{L^2(Q(0, 0))}$$

\noindent whenever $\psi$ is a matrix entry of $MF_n$
$$\beta \in [\alpha_n - 4, \alpha_n + n], \textnormal{ and }$$
$$f \in C'_\beta.$$ 
  By rescaling and translating we may suppose $$Q(a_n, \alpha_n) = Q(a, 
\alpha)$$ for all $n$ and by passing to a subsequence we suppose $$Q(b_n, \alpha_n) = Q(b, \alpha)$$ for all $n$.  Further, the Arzel\`a-Ascoli theorem lets us pass to another subsequence such that 
$F_n$ converges uniformly on $Q(0, 0)$ to some Lipschitz map $F$.  
Moreover, by translation (we can do this because of the expanded $C'$ 
families) we can suppose $Q(a, \alpha)$ and $Q(b, \alpha)$ have the same 
great-great-grandparent $Q(z, \alpha - 4)$.  By weak-star convergence, the 
restriction of each component of $MF$ to $Q(z, \alpha - 4)$ is 
orthogonal to $C_\beta$ for $\beta > \alpha - 4$ which implies that $MF$ 
is constant almost everywhere on $Q(z, \alpha - 4)$.  From this, the 
discussion in Section 2.3.3 lets us conclude that there exists a Lie group 
homomorphism $\phi$ such that $DF = \phi$ on $Q(z, \alpha - 4)$ and 
further, there exist elements $g_0 \in G, h_0 \in H$ such that
\begin{equation}
F(g) = h_0 \phi(g_0^{-1} g)
\end{equation}
 for all $g \in Q(z, \alpha - 4)$.  Further, $$h^k(F(Q(a, \alpha)) \geq \liminf_n h^k(F_n(Q(a, 
\alpha))) \geq \epsilon E^{-k \alpha}$$ 
because $F_n(Q(a, \alpha))$ is eventually contained in an arbitrarily 
small neighborhood of the closure $\bar{F}(Q(a, \alpha))$; such a 
neighborhood can have Hausdorff content arbitrarily close to $h^k(F(Q(a, 
\alpha)))$.

Working towards a contradiction, we next define the sequences of points 
$\{X_n\}$ and $\{Y_n\}$ such that 
$$X_n \in Q(a, \alpha), Y_n \in Q(b, \alpha),$$ 
\noindent and 
$$d_{CC}(F_n(X_n), F_n(Y_n)) \leq \frac{1}{2} \epsilon L_1 \theta E^{-\alpha}.$$
By the definition of sequential compactness, there exist points 
$a' \in Q(a, \alpha)$, $b' \in Q(b, \alpha)$ such that $$d_{CC}(F(a'), F(b')) \leq \frac{1}{2} \epsilon L_1 \theta E^{-\alpha}.$$

However, because $Q(a, \alpha)$ and $Q(b, \alpha)$ are semi-adjacent,
$$d_{CC}(a', b') \geq L_1 \theta E^{-\alpha}.$$  
Therefore, since (5) implies that the Pansu differential $DF$ of $F$ 
is defined everywhere and, in fact, is constant, 
the image of the Pansu differential $DF$ of $F$ in the direction of the 
tangent vector from $a'$ 
to $b'$ has 
magnitude at most $\frac{1}{2} \epsilon$.  As $F$ is Lipschitz with 
coefficient $1$, this 
implies that
\begin{equation}
h^k(F(Q(a, \alpha))) \leq |F(Q(a, \alpha))| \leq \frac{1}{2} \epsilon E^{-k 
\alpha}.
\end{equation}
\noindent The first inequality in (6) follows immediately from the fact
that Hausdorff content is bounded above by Hausdorff measure.  The second 
inequality is a direct consequence of the change-of-variables formula for 
Carnot groups (cf the proof of Theorem 7 of \cite{Vo}, which can be 
directly adapted to this case).

As (6) contradicts our hypotheses, the proposition follows.
\end{proof}

Armed with this proposition, our next goal is to show that a sufficiently 
large portion of our domain lies in finitely many such semi-adjacent 
pairs.

\begin{prop}
Let $\Omega$ be the set of all pairs of cubes which satisfy the hypotheses 
of Proposition 2.15 and let
$$\phi(x) = \#\{\omega = (Q, Q') \in \Omega: x \in Q \cup Q'\}.$$

\noindent Suppose $N > 0$; then there exists a constant $K'$ 
depending only $G$, $H$, and $\epsilon$ such that 

$$|\{x:  \phi(x) \geq N\}| \leq K' N^{-1}.$$
\end{prop}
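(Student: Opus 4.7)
The plan is a three-step argument combining Proposition 2.16 with a Bessel-type inequality and a distributional argument. For each pair $\omega = (Q_\omega, Q'_\omega) \in \Omega$, Proposition 2.16 produces a wavelet $f_\omega \in C'_{\beta_\omega}$ with $\beta_\omega \in [\alpha_\omega - 4, \alpha_\omega + n]$ and a matrix entry $(i_\omega, j_\omega)$ of $MF$ such that the wavelet coefficient
\[
c_\omega := \frac{\langle (MF)_{i_\omega j_\omega}, f_\omega\rangle}{\langle f_\omega, f_\omega\rangle}
\]
satisfies $|c_\omega| \geq C\, |Q_\omega|^{1/2}$. As the remark after Proposition 2.16 notes, the assignment $\omega \mapsto (f_\omega, (i_\omega, j_\omega))$ is at most boundedly-many-to-one, with a bound that depends only on $G$ and $H$.

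Next I would invoke the approximate orthogonality of the enlarged wavelet family $\bigcup_\beta C'_\beta$ constructed in Section 2.3.2 and apply the resulting Bessel inequality to each component $(MF)_{i,j}$. Since $F$ is $1$-Lipschitz, each $(MF)_{i,j}$ is uniformly bounded, so $\|(MF)_{i,j}\|_{L^2(Q(0,0))}^2 \lesssim 1$ with constants depending only on $G$ and $H$. Combined with the finite multiplicity of the assignment $\omega \mapsto f_\omega$, this yields
\[
\sum_\omega \frac{|\langle (MF)_{i_\omega j_\omega}, f_\omega\rangle|^2}{\langle f_\omega, f_\omega\rangle} \;\lesssim\; 1.
\]
Using $\langle f_\omega, f_\omega\rangle \asymp |Q_\omega|$ (each $f_\omega$ is $\pm 1$ on sibling cubes at a scale comparable to $\alpha_\omega$) together with the lower bound from Proposition 2.16, this produces a Carleson-type packing estimate $\sum_\omega |Q_\omega|^{2} \leq K''$ with $K''$ depending only on $G, H, \epsilon, K$.

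Finally, I would translate this $\ell^2$-type packing bound into the desired weak-$L^1$ estimate $|\{\phi \geq N\}| \leq K'/N$ by exploiting the scale-by-scale combinatorial finiteness: at any fixed scale $\alpha$ each point of $Q(0,0)$ belongs to only boundedly many pairs in $\Omega$ of scale $\alpha$, with a bound depending on $G$. Concretely, I would introduce an auxiliary weighted multiplicity such as
\[
\Phi(x) := \sum_\omega c_\omega^{2}\, \mathbf{1}_{Q_\omega \cup Q'_\omega}(x),
\]
whose integral is controlled by the Bessel sum above and which, via $|c_\omega| \geq C|Q_\omega|^{1/2}$, dominates a weighted version of $\phi$. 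A Chebyshev / layer-cake argument, combined with the dyadic per-scale finiteness, should then deliver the bound.

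The main obstacle is precisely this last step. The $\ell^2$ packing bound $\sum_\omega |Q_\omega|^2 \lesssim 1$ does not obviously imply $\int \phi \lesssim 1$, since pairs at very fine scales could in principle contribute to $\phi$ pointwise at many points without contributing much to the squared-measure sum. Reconciling the Bessel/Carleson packing with the per-scale combinatorial finiteness so as to control the multiplicity function $\phi$ in weak $L^1$, rather than just its weighted analogue, is where the real work of the proof lies.
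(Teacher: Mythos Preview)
Your overall architecture---associate to each $\omega\in\Omega$ a wavelet from Proposition~2.16, run a Bessel inequality using approximate orthogonality, and finish with Chebyshev---is exactly the paper's. The place where you diverge, and where your ``main obstacle'' appears, is the exponent in the packing estimate.

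You take the displayed inequality~(4) in Proposition~2.16 literally, reading it as
\[
\frac{|\langle (MF)_{i,j}, f_\omega\rangle|}{\langle f_\omega, f_\omega\rangle}\;\ge\; C\,|Q_\omega|^{1/2},
\]
and from this you correctly deduce only $\sum_\omega |Q_\omega|^2 \lesssim 1$, which is indeed too weak. But the paper's own proof of Proposition~2.17 does \emph{not} use that version of the bound; it uses (and the argument in the proof of Proposition~2.16 actually gives) the $L^2$-normalized form
\[
\frac{|\langle (MF)_{i,j}, f_\omega\rangle|}{\|f_\omega\|_{2}}\;\gtrsim\; |Q_\omega|^{1/2},
\qquad\text{equivalently}\qquad
\frac{|\langle (MF)_{i,j}, f_\omega\rangle|^2}{\langle f_\omega, f_\omega\rangle}\;\gtrsim\;|Q_\omega|.
\]
With this normalization the Bessel/approximate-orthogonality step yields
\[
\sum_{\omega\in\Omega}|Q_\omega|\;\lesssim\;\|MF\|_{2}^{2}\;\lesssim\;1,
\]
with constants depending only on $G,H,\epsilon,K$. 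Since $|Q_\omega\cup Q'_\omega|\asymp|Q_\omega|$ for semi-adjacent pairs, this is precisely $\int\phi\lesssim 1$, and Chebyshev gives $|\{\phi\ge N\}|\le K'/N$ immediately. There is no residual ``real work'': the per-scale combinatorial argument and the weighted multiplicity $\Phi$ you propose are unnecessary.

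In short, the gap in your proposal is not a missing idea but a mis-normalized wavelet coefficient (the paper's statement of~(4) is itself inconsistent with its later use). Once you square the $L^2$-normalized coefficient rather than the $\langle f,f\rangle$-normalized one, each Bessel term dominates $|Q_\omega|$ rather than $|Q_\omega|^2$, and the proof closes in one line.
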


\begin{proof}  If $(Q, Q') \in \Omega$, Proposition 2.15 gives us a wavelet function 
$f_{Q, Q'}$ corresponding to $(Q, Q')$ such that the projection of $MF$ onto $f_{Q, Q'}$ had 
$L^2$ magnitude at least  $C \epsilon |Q|^{1/2}$.  However, only a bounded number of pairs of cubes can be assigned a given wavelet function in this way.  This is because of the control that Proposition 2.15 gives to both the scale and support of $f_{Q, Q'}$ in terms of the scale and location of $Q$.  Now, we seek to show that 

$$1 \succeq \Sigma_{(Q, Q') \in \Omega} |Q|$$

\noindent where the implied multiplicative constant only depends on $G$, $H$, 
and $\epsilon$.

Because $F$ is $1$-Lipschitz, we can replace the constant $1$ on the left hand side with $||MF||_2^2$.
Next, for any specific pair $(Q, Q')$ in our sum, we let $\pi_{(Q, Q')} (MF)$ be the orthogonal projection of $MF$ onto $f_{Q, Q'}$.  By Proposition 2.15,
$$||\pi_{(Q, Q')}(MF)||_2 \geq C \epsilon |Q|^{\frac{1}{2}};$$ in other words,
$$\int |\pi_{(Q, Q')}(MF)|^2 \geq C^2 \epsilon^{2} |Q|.$$
Summing this over $\Omega$ gives us indeed that 
$$1 \succeq ||MF||_2^2 \succeq \Sigma_{(Q, Q') \in \Omega} |Q|$$
because the $f_{Q, Q'}$ are approximately orthogonal and a given wavelet function can only appear in the sum a bounded number of times.  However, 
$$\int \phi = \Sigma_{(Q, Q') \in \Omega} |Q|,$$
so Chebyshev's inequality therefore tells us that  
$$S_N = \{x:  \phi(x) \geq N\}$$
\noindent has
$$|S_N| \preceq N^{-1},$$ 
which proves the proposition.
\end{proof}

\begin{proof}[Proof of theorem]  We complete the theorem through an 
infinite series of iterations as in \cite{J}.  This process is divided 
into stages (indexed by $\alpha \geq 0$); at stage $\alpha$ we assign 
each point $x$ of each subcube of $Q(0, 0)$ of scale $\alpha$ a 
label $x_\alpha$, i.e. a finite string of zeroes and ones, such that every 
point in a fixed cube of scale $\alpha$ has the same label.  

  At stage $0$ we apply a leading digit of 0 to every point in the 
base cube.  In other words, for each $x \in Q(0, 0)$, we set $x_0 = 0.$  
Also, we define $Z_0 = \emptyset$ for future reference.

For $0 < \alpha$, we begin by defining the garbage set $Z_\alpha$ by 
letting $S_\alpha$ be the collection of all cubes $Q$ of scale $\alpha + 
W$ 
such that $$|F(Q)| \leq \delta E^{-k (\alpha + W)}$$ 

\noindent and set $Z_\alpha = S_\alpha \cup Z_{\alpha - 1}$.

Next, we run through each pair of cubes at scale $\alpha + W$ which lie in 
$Q(0, 0) \backslash Z_\alpha$ and which satisfy the hypotheses of 
Proposition 2.16 with $\epsilon = .01\delta$.  Supposing that there 
are $n_\alpha$ such pairs $(Q_1, Q'_1), \dots, (Q_{n_\alpha}, 
Q'_{n_\alpha})$, we will inductively define the labels 
$x_{(\alpha, m)}$ for $m = 0, 1, \dots, n_\alpha$ as follows:

First, $x_{(\alpha, 0)} = x_{\alpha - 1}$ for each $x \in Q(0, 0) 
\backslash Z_\alpha$.
Then, for $m > 0$ we define $x_{(\alpha, m)} = x_{(\alpha, m - 1)}$ for $x 
\notin Q_m \cup Q'_m$.  We note that $x_{(\alpha, m - 1)}$, when viewed as 
a function on $Q(0, 0) \backslash Z_\alpha$, is constant at 
a value (call it $z_1$, and let $y_1$ be its length) on $Q_m$ and at a 
possibly different value (call it $z_2$, and let $y_2$ be its length) on 
$Q'_m$; without loss of generality we may assume that $y_1 \geq y_2$.
There are several cases to consider:

I) If $y_1 = y_2$ and $z_1 \neq z_2$ we simply define 
$x_{(\alpha, m)} = x_{(\alpha, m - 1)}$ on both $Q_m$ and $Q'_m$.

II) If $y_1 = y_2$ and $z_1 = z_2$ we then let $x_{(\alpha, m)}$ be equal 
to the string created by adding a $0$ to the end of $x_{(\alpha, m - 1)}$ 
on $Q_m$ and the string created by adding a $1$ to the end of $x_{(\alpha, 
m - 1)}$ on $Q'_m$.

III) If $y_1 > y_2$ and $z_2$ IS NOT the first $y_2$ digits of $z_1$ we 
simply define $x_{(\alpha, m)} = x_{(\alpha, m - 1)}$ on both $Q_m$ and $Q'_m$.

IV) If $y_1 > y_2$ and $z_2$ IS the first $y_2$ digits of $z_1$, we let 
define $x_{(\alpha, m)} = x_{(\alpha, m - 1)}$ on $Q_m$; on $Q'_m$ we let 
$y'$ be the element of $\{0, 1\}$ that IS NOT the ($y_2 + 1$)th digit of 
$z_1$ and define $x_{(\alpha, m)}$ on $Q'_m$ to be the string created by 
adding $y'$ to the end of $x_{(\alpha, m - 1)}$.

Once we have finished this process for each cube, we define $x_\alpha = 
x_{(\alpha, n_\alpha)}$ on $Q(0, 0) \backslash Z_\alpha$.

Now, defining $Y_n$ to be the set of all points $x$ such that $x_\alpha$ 
has length at least $n$ for some $\alpha$, we conclude from Proposition 
2.16 that there exists $N$ such that 
$$|\{x \in Q(0, 0) \backslash \bigcup_\alpha Z_\alpha:  x \in Y_N\}| < .01 
\delta;$$

\noindent we now define the set $Z = \bigcup_\alpha Z_\alpha \cup Y_N$.

If $x \in Q(0, 0) \backslash Z$, then the sequence $\{x_\alpha\}$ is 
eventually constant; denote its limiting value by $x_\infty$.  Note that 
there are are at most $2^N$ possible values of $x_\infty$ as 
there are at most $2^n$ strings of length $n$.   

We finish by setting $$X_w = \{x \in Q(0, 0) \backslash Z:  x_\infty = 
w\}$$ 
whenever $w$ is a string of zeroes and ones of length less than $N$.
For each such $w$, $F|X_w$ must be biLipschitz (if not, there exist $x_1, 
x_2 \in X_w$ and a pair of cubes $(Q, Q')$ satisfying the hypotheses of 
Proposition 2.15 such that $x_1 \in Q, x_2 \in Q'$, contradicting the 
definition of $X_w$), proving the theorem.
\end{proof}

\subsection{Consequences}\label{sec26}

\begin{co}
Suppose $A$ is an open subset of a discretizable Carnot group $G$ (with homogeneous 
dimension $k$), $H$ is another Carnot group, and $F: A \rightarrow H$ is 
Lipschitz, and $H^k(F(A)) > 0$.  Then there exists 
a subset $B \subset A$ of positive $k$-dimensional Hausdorff measure such 
that $F$ restricted to $B$ is biLipschitz.  
 \end{co}

\begin{proof}  
We can express $A$ as a countable union of translates and 
dilates of the base cube $Q(0, 0)$; by countable additivity of Hausdorff 
measure one of these cubes, which we call $C$, is sent by $F$ to a set 
$F(C)$ with $H^k(F(C)) > 0$.  By rescaling we can 
suppose $C$ is the base cube $Q(0, 0)$.  
The previous theorem divides this cube into the union of a 'garbage' set 
$Z$  (consisting of those cubes whose image has measure too small, as well 
as those cubes which are in too many bad pairs), where $F(Z)$ can be taken 
to be arbitrarily small (say, with $h^k(F(Z)) < .5 h^k(F(A))$) and a 
finite union of sets $F_j$ such that $F|F_j$ is 
biLipschitz for each $j$.  As 
$H^k(F(\cup_j F_j)) > 0$, there exists some $j$ where $|F_j| > 0$ and we 
let $B = F_j$.
\end{proof}

If one assumed that $H^k(A) < \infty$, looking closely at the shape of $A$ would allow us to conclude above that the measure of $B$ and the biLipschitz constant of $F$ would depend only on 
$G$, $H$, $A$, the Lipschitz coefficient of $F$, and the $k$ 
dimensional Hausdorff content of $F(A)$.

Restricting attention to the first Heisenberg group $H_1$, we use this 
corollary to show that if we only consider maps whose domains are open, 
two questions from \cite{H} are equivalent.  To 
begin we need two more definitions.

\begin{defi}
Suppose $Q_1$ and $Q_2 $ are metric spaces with Hausdorff dimension $k$.  
We say that $Q_1$ \textbf{looks down on} $Q_2$ if there exists a Lipschitz function 
$f$ from some subset of $Q_1$ to $Q_2$ such that the image of $f$ has 
nonzero Hausdorff $k$-measure.
\end{defi}

\begin{defi}
Suppose $Q$ is a metric space with Hausdorff dimension $k$.  We say that 
$Q$ is \textbf{minimal in looking down} if whenever $Q'$ is a metric space 
with Hausdorff dimension $k$ such that $Q$ looks down on $Q'$, $Q'$ also 
looks down on $Q$.
 \end{defi}
Note that the above definition is formulated differently from the definition given in \cite{H}.

Question 22 in \cite{H} asks whether the first Heisenberg group is minimal in looking 
down and Question 24 asks if every Lipschitz map from $H_1$ 
to a metric space with nontrivial Hausdorff 4-measure is biLipschitz on 
some subset with positive Hausdorff 4-measure.  

Clearly 24 implies 22.  However, we now know from the corollary that 22 
implies 24 when only looking at maps from open sets.  This is true 
because (assuming $H_1$ is minimal in looking down) if $F :  E 
\subset H_1 \rightarrow X$ is Lipschitz and $H^4(F(E)) > 0$ then, letting 
$G: X \rightarrow H_1$ be another Lipschitz map with $H^4(G(X)) > 0$
(and supposing, by restricting images, that $X = F(E)$), $G \circ F$ 
satisfies the conditions of the corollary and therefore is biLipschitz on 
some subset $E' \subset E$ with $|E'| > 0$.  On this 
set, we therefore 
have that $F$ is invertible with inverse $(G \circ F)^{-1} \circ G$, 
which is clearly Lipschitz, which therefore implies that $F|E'$ is 
biLipschitz.
Because $F$ was arbitrary, we can conclude that Question 24, when 
restricted to maps defined on open sets, is equivalent to Question 22.

Raanan Schul recently proved a statement corresponding to Question 24 for 
maps where the domain is Euclidean in \cite{Sc}.  In particular, he showed 
that if $F$ is a Lipschitz function from the $k$-dimensional unit cube 
$[0, 1]^k$ into a general metric space, one can decompose $$[0, 1]^k = G 
\cup \bigcup_{j = 1}^n F_j$$ where $F(G)$ has arbitrarily small $k$-dimensional Hausdorff 
content and $F$ is biLipschitz on each of the $F_j$.  The main reason why 
Schul's argument does not generalize to this setting is the dearth of 
rectifiable curves passing through a given point in a general Carnot group.
For example, although the first Heisenberg group has Hausdorff dimension 
4, the space of horizontal tangents to rectifiable curves through a given 
point in that group has dimension two.

We finish this section by discussing the question of Jones-style 
decompositions for Lipschitz maps on Carnot groups.  Just as in the work 
of Peter Jones in \cite{J}, my argument for the main theorem actually 
implies the following stronger statement:

\begin{co}
Suppose $U$ is a bounded open subset of a discretizable Carnot group $G$ with Hausdorff 
dimension $Q$, $H$ is another Carnot group, $F: U \rightarrow H$ is 
Lipschitz, and $\epsilon > 0$.  Then there exists a finite collection 
$\{A_i\}$ of subsets of $U$ such that each restriction $F|A_i$ is 
biLipschitz and $$h^Q(F(U \backslash \cup_i A_i)) < \epsilon.$$
\end{co}

\noindent For unbounded open subsets of discretizable Carnot groups a diagonalization 
argument yields the following.

\begin{co}
Suppose $U$ is an open subset of a discretizable Carnot group $G$ with Hausdorff 
dimension $Q$, $H$ is another Carnot group and $F: U \rightarrow H$ is 
Lipschitz.  Then there exists a countable collection 
$\{A_i\}$ of subsets of $U$ such that each restriction $F|A_i$ is 
biLipschitz and $$h^Q(F(U \backslash \cup_i A_i)) = 0.$$
\end{co}

A natural generalization of the above results is in the setting of sub-Riemannian manifolds, defined below.

\begin{defi}
A \textbf{sub-Riemannian manifold} is a triple $(M, \Delta, g)$ where $M$ 
is a smooth manifold, $\Delta$ is a distribution (i.e. sub-bundle of the 
tangent bundle $TM$) on $M$ which is smooth and satisfies the property 
that for each $p \in M$, $(TM)_p$ is generated as a Lie algebra by 
$\Delta_p$, and $g$ is a smooth section of positive-definite quadratic 
forms on $\Delta$ (i.e. $g_p$ defines an inner product on $\Delta_p$ which 
varies smoothly in $p$).
\end{defi}

Recall (see \cite{V}) that the set $S$ is said to generate a Lie algebra 
$\mathfrak{g}$ if the set of finite Lie brackets of elements of $S$ spans 
$\mathfrak{g}$ as a vector space.

We shall consider $M$ to be naturally equipped with a metric $d_{CC}$ 
defined as follows:  for $x, y \in M$,

$$d_{CC}(x, y) = \inf_{\gamma \in \Gamma_{x, y}} \int_0^1 
\sqrt{g(\gamma'(t), \gamma'(t))} dt$$

\noindent where $\Gamma_{x, y}$ is the family of all curves $$\gamma: [0, 
1] \rightarrow M$$ with $\gamma(0) = x$, $\gamma(1) = y$, and $\gamma'(t) \in 
\Delta_{\gamma(t)}$ for all $t$.

Now, suppose $M$ and $N$ are sub-Riemannian manifolds such that $M$ is locally biLipschitz equivalent to a discretizable Carnot group $G$ and $N$ is locally biLipschitz equivalent to a Carnot group $H$.  Then Corollary 2.21 still holds if $G$ is replaced by $M$ and $H$ is replaced by $N$.

For example, $M$ and $N$ could both be ordinary Riemannian manifolds.  Because Riemannian manifolds are locally 
biLipschitz equivalent to Euclidean spaces, where we have all five properties from Section 2, we can consider arbitrary subsets of $M$ instead of just open subsets.  Thus we have the following corollary:  
if $M$ is a Riemannian manifold, $A \subset M$ has Hausdorff dimension $k$, $N$ 
is another Riemannian manifold, and $F: A \rightarrow N$ is Lipschitz with $H^k(F(A)) > 0$, then 
there exists a subset $B \subset A$ with $H^k(B) > 0$ such that $f|B$ is 
biLipschitz.  

Note that not all sub-Riemannian manifolds are locally biLipschitz 
equivalent to Euclidean spaces, and hence we cannot replace $G$ and $H$ by arbitrary sub-Riemannian manifolds in Corollary 2.21.  In particular, we will show in Section 4.2 that Corollary 2.21 becomes false if $G$ and $H$ are replaced by the Grushin plane and the Euclidean plane, respectively.

\section{Hausdorff Dimension of Lipschitz Images}\label{sec3}

We begin by observing the following corollary of the results in Section 2.

\begin{co}

Assume $A$ is an open subset of a discretizable Carnot group $G$ with 
homogeneous dimension $k$, assume $H$ is another Carnot group, and let $f: A 
\rightarrow H$ be a Lipschitz map such that $H^k(f(A)) > 0$.  Then there exists an injective Lie group 
homomorphism from $G$ to $H$.

\end{co}

\begin{proof}  By the preceding results, $f$ is biLipschitz on some 
$B \subset A$ with positive $k$-dimensional Hausdorff measure.  Then the 
Pansu differential of $f$ at any Lebesgue point of $B$ gives the desired homomorphism.
\end{proof}

Because the converse of this result is trivial (the Lie group homomorphism 
in question is locally Lipschitz), Corollary 3.1 reduces the 
question of whether one Carnot group `looks down' on another to a 
question about the groups' Lie algebras.

An easy consequence of Corollary 3.1 is that if $G$ 
is a discretizable non-abelian Carnot group with homogeneous dimension $k$ and $U \subset G$ then 
every Lipschitz image of $U$ in any Euclidean space has zero $k$-dimensional Hausdorff measure.  This follows because there are no injective group homomorphisms from a 
nonabelian group to an abelian group.  In fact, for this consequence we need not assume $U$ is open here because the image space, Euclidean space, has the Lipschitz extension property.

Despite having Hausdorff measure $k$-measure zero, the Lipschitz image of $U$ in $\textbf{R}^k$ can still be 
quite large.  For example, we have the following theorem, which answers a 
question asked by Enrico Le Donne (cf \cite{L}):

\begin{teo}

Suppose that $G$ is a discretizable Carnot group with homogeneous dimension $k$, and 
let $\epsilon > 0$.  There exists a bounded open $U \subset G$ and a 
Lipschitz map $F:  U \rightarrow \textbf{R}^k$ such that $H^{k - 
\epsilon}(F(U)) > 0$.

\end{teo}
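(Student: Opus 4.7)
The plan is to build a sufficiently thick self-similar Cantor set $C \subset G$, to map it biLipschitzly onto a matching Cantor set $K \subset \mathbb{R}^k$, and then to extend to a Lipschitz map on an open neighborhood using McShane's theorem (which applies because the target is Euclidean, as noted in Section 2.3.5). Since $\dim_H K > k - \epsilon$ forces $H^{k-\epsilon}(F(U)) \geq H^{k-\epsilon}(K) > 0$, this suffices.

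To build $C$, fix a $d_{CC}$-ball $B \subset G$ of radius $1$ and a small contraction ratio $r > 0$. I would pick $N := \lceil r^{-(k-\epsilon/2)} \rceil$ translations $g_1, \dots, g_N \in G$ so that the $N$ sets $g_i \delta_r(B)$ lie inside $B$ and are pairwise $d_{CC}$-separated by at least $c_0 r$ for a fixed constant $c_0 > 0$ depending only on the group. Because the Carnot Haar measure scales as $|\delta_r(B)| \sim r^k |B|$, the selected balls occupy a total volume fraction of only $N r^k \sim r^{\epsilon/2}$, which goes to $0$ as $r \to 0$, so the separated packing is possible for $r$ sufficiently small relative to $\epsilon$ and $G$. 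Iterating the $N$ contractions $\phi_i(x) := g_i \delta_r(x)$ and taking the usual nested intersection produces a compact self-similar Cantor set $C \subset B$ of Hausdorff dimension $(\log N)/\log(1/r) > k - \epsilon$.

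In parallel I would construct a Euclidean Cantor set $K \subset \mathbb{R}^k$ by applying the same $N$ and ratio $r$ to Euclidean translates of the Euclidean $r$-scaling of the unit ball, again with quantitative Euclidean separation at every level. Matching the two combinatorial trees gives a canonical bijection $F_0: C \to K$. If $x, y \in C$ first split at iteration $n$, then both $d_{CC}(x, y)$ and $|F_0(x) - F_0(y)|$ are comparable to $r^n$, with multiplicative constants depending only on $c_0$, the Euclidean separation, and the group; hence $F_0$ is biLipschitz. Applying McShane's real-valued extension theorem to each of the $k$ coordinate functions of $F_0$ yields a Lipschitz extension $F: G \to \mathbb{R}^k$, and restricting to any bounded open $U \supset C$ finishes the argument since $F(U) \supset K$.

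The hardest step is the separated-packing construction in $G$: fitting $N \sim r^{-(k-\epsilon/2)}$ disjoint, $c_0 r$-separated copies of $\delta_r(B)$ inside $B$. Once $r$ is small enough that the volume density $N r^k \sim r^{\epsilon/2}$ is below, say, $1/2$, a greedy volume argument produces such a configuration, and this is where the homogeneous dimension $k$ of $G$ enters in an essential way through the dilation scaling of Haar measure. Everything else — the dimension formula for $C$, the parallel Euclidean construction, the biLipschitz estimate for $F_0$, and the McShane extension — is then routine from the self-similar structure.
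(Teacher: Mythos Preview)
Your argument is correct. Both your proof and the paper's follow the same core strategy: build self-similar Cantor sets $C \subset G$ and $K \subset \mathbf{R}^k$ of dimension exceeding $k-\epsilon$, define a (bi)Lipschitz bijection $C \to K$ by matching the coding trees, and then extend to a Lipschitz map on a neighborhood. The implementations differ in two respects. First, the paper (following Kaufman) fixes the branching number (sixteen, in the $H^1$ case) and tunes the contraction ratio to hit the target dimension, placing the sixteen boxes by hand with explicit separation estimates; you instead fix a small ratio $r$ and choose the branching number $N \approx r^{-(k-\epsilon/2)}$, which costs a short volume-packing argument. Second, and more substantively, the paper does not invoke McShane: it extends $F$ explicitly by collapsing each box boundary to a single point and interpolating smoothly on the annular regions between successive generations. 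This yields extra structure---$F$ is $C^1$ off the Cantor set, has vanishing Pansu differential on it, and is constant on the boundary of the initial box---which the paper then exploits to nest rescaled copies and deduce Corollary~3.3 (a single Lipschitz map whose image has Hausdorff dimension exactly $k$). Your McShane extension is cleaner and entirely sufficient for the theorem as stated, but does not immediately deliver that follow-up.
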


\begin{proof}  As in our results in Section 2, we illustrate the case $G 
= H^1$ in detail and remark that the construction is analogous for the general case.  The construction is based on the procedure from \cite{K}.

\noindent We begin by setting $$\gamma = 16^{\frac{1}{\epsilon - 4}}$$ which tells 
us that $$\gamma < \frac{1}{2}$$ and $$\log_{\gamma^{-1}} 16 = 4 - 
\epsilon.$$

\noindent We next fix $\beta \in [\gamma, \frac{1}{2})$ and define $$\lambda = 
\frac{20}{\frac{1}{4} - \beta^2};$$ in particular,
$$\lambda (\frac{1}{4} - \beta^2) = 20 > 10.$$

\noindent With this data, we then set our initial box 
$$I^0 = [-1, 1] \times [-1, 1] \times [-\lambda, \lambda] \subset H^1$$ 
and 
$I^1$ to be the union of the sixteen boxes 
$$(a, b, c) \cdot \delta_\beta I_0$$ where 
$$a \in \{-.5, .5\}, b \in \{-.5, .5\},$$ and $$c \in \{-.75 
\lambda, -.25 \lambda, .25 \lambda, .75 \lambda\}.$$  
We arbitrarily label these boxes $I^1_j$ for $j = 1, \dots, 16$. 

The point of this construction is to find $\eta > 0$ such that

$$d_{CC}(I^1_j, I^1_k) > \eta \textrm{ for } j \neq k$$
and
$$d_{CC}(I^1_j, \delta(I^0)) > \eta \textrm{ for all } j.$$

Clearly, if two of the boxes in $I^1$ have different horizontal 
components, then they are at least $1 - 2 \beta$ apart; similarly, every
box in $I^1$ is at a distance of exactly $.5 - \beta$ away from the 
nearest horizontal edge of $I_0$.

The only issue is vertical distance.  To find the minimum distance between 
a vertical edge of $I^0$ and a box in $I^1$, it suffices to consider a box 
in $I^1$
where $c = -.75 \lambda$ and look at the bottom edge of $I^0$.  Every 
point on the bottom 
edge of such a box has a vertical coordinate which is at
least $$-.75 \lambda - \beta^2 \lambda - 2 \cdot .5 \beta > 
-\lambda + 10 
- 2 = -\lambda + 8.$$
Now, we recall that if $g = (x_1, y_1, 0)$ and $h = (x_2, y_2, 0)$ are 
points in $H^1$ with $x_1, y_1, x_2, y_2 \in [-1, 1]$, then writing the 
product $g^{-1} h$ as $(x_3, y_3, z_3)$ we note that $|z_3| < 2$.  

Consequently, if $p = (p_1, p_2, p_3)$ is a point in $I_1$ and 
$q = (q_1, q_2, -\lambda)$ is a point on the bottom edge of $I_0$, we 
note that the vertical coordinate of $p^{-1} q$ is at most 
$$-(-\lambda + 8) - \lambda + 2 = -6,$$

\noindent which implies that vertical edges of $I^0$ will be separated 
from boxes in $I^1$ by at least 6 units.

Similarly, looking at two boxes in $I^1$ with the same horizontal 
component (e.g. let $A$ be such a box with $c = -.75 \lambda$ and $B$ be 
such a box with $c = -.25 \lambda$), the 
vertical coordinate of points in $A$ are at most $-.5 
\lambda - 8$ 
and the  vertical coordinate of points in $B$ are at least $-.5 
\lambda + 8$.
Therefore, whenever $a \in A$ and $b \in B$, the vertical 
coordinate of $a^{-1} b$ is at least 
$$(\lambda + 8) - (\lambda - 8) - 2 = 14,$$

\noindent implying a separation of 14 between any two such boxes.

In subsequent stages we replace each box of the form $$p \cdot 
\delta_\mu I^0$$ (there are $16^k$ such boxes in stage $k$; at this stage $\mu = \beta^k$) with the 
sixteen boxes 
$$p \cdot \delta_\mu(a, b, c) \cdot \delta_{\beta \mu} \} I^0$$
and call the union of all the boxes produced in stage $k$ $I^k$.

In stage $k$, each box has a label of the form $I^k_{(a_1, \dots, a_k)}$ 
where each $a_i$ ranges from one to sixteen; we extend this process to 
stage $k + 1$ by labeling the subboxes from $I^k_{(a_1, \dots, a_k)}$ as 
$I^{k + 1}_{(a_1, \dots, a_k, v)}$ where $v = 1, 2, \dots, 16$.  The 
intersection of the $I^k$'s, to 
be defined as $I$, is a Cantor set in $H^1$ of dimension 
$$\log_{\beta^{-1}} 16 \geq 4 - \epsilon.$$  Each point $x \in I$ 
has a unique label of the form 
$(a_1, \dots, a_n, \dots)$ where each $a_i$ ranges from one to sixteen 
such that for each $n \in \textbf{N}$, $x \in I^n_{(a_1, \dots, a_n)}$; if 
$v = (a_1, \dots, a_n, \dots)$ and $w = (b_1, \dots, b_n, \dots)$ with $m$ 
being the smallest integer where $a_m \neq b_m$, the distance between the 
points corresponding to $v$ and $w$ is (up to a multiplicative constant 
independent of $m$) equal to $\beta^m$.

Similarly, we set $J^0$ to be the box $[-1, 1]^4$ in Euclidean space 
$\textbf{R}^4$
and $J^1$ to be the union of the sixteen boxes $$(a, b, c, d) + \gamma 
I^0$$ 
where $a, b, c, d$ can each equal $-.5$ or $.5$. We arbitrarily label 
these boxes $J^1_j$ for $j = 1, \dots, 16$. 

The point of this construction is now to find $\eta' > 0$ such that

$$d(J^1_j, J^1_k) > \eta' \textrm{ for } j \neq k$$
and
$$d(J^1_j, \delta(J^0)) > \eta' \textrm{ for all } j.$$

where the distance above is Euclidean.

Clearly, any two of the boxes in $J^1$ are at least $1 - 2 \gamma$ 
apart; similarly, each such box is at a distance of exactly $.5 - \gamma$ 
away from the boundary of $J^0$.  

In subsequent stages we replace the box $$p + \nu J^0$$ with the 
sixteen boxes $$p + \nu ((a, b, c, d) + \gamma J^0)$$ and call the 
union of all boxes produced in stage $k$ $J^k$.  Note that at stage $k$, $\nu = \gamma^k$.

In stage $k$, each box has a label of the form $J^k_{(a_1, \dots, a_k)}$ 
where each $a_i$ ranges from one to sixteen; we extend this process to 
stage $k + 1$ by labeling the subboxes from $J^k_{(a_1, \dots, a_k)}$ as 
$J^{k + 1}_{(a_1, \dots, a_k, v)}$ where $v = 1, 2, \dots, 16$.  The 
intersection of the $J^k$'s, to 
be defined as $J$, is a Cantor set in $\textbf{R}^4$ of dimension 
$$\log_{\gamma^{-1}} 16 = 4 - \epsilon.$$  Each point $x \in J$ 
has a unique label of the form 
$(a_1, \dots, a_n, \dots)$ where each $a_i$ ranges from one to sixteen 
such that for each $n \in \textbf{N}$, $x \in J^n_{(a_1, \dots, a_n)}$; if 
$v = (a_1, \dots, a_n, \dots)$ and $w = (b_1, \dots, b_n, \dots)$ with $m$ 
being the smallest integer where $a_m \neq b_m$, the distance between the 
points corresponding to $v$ and $w$ is (up to a multiplicative constant 
independent of $m$) equal to $\gamma^m$.

We can define a Lipschitz map $F$ from $I^0 \subset H^1$ to $\textbf{R}^4$ 
whose image contains $J$ (and therefore has Hausdorff dimension 
$\log_{\gamma^{-1}}(16)$) via the following three-step process.

\textbf{Step 1}:  Map $I$ to $J$.  This is done by mapping a point in $I$ 
with a label of the form $(a_1, \dots, a_n, \dots)$ to the point with the 
same label in $J$.  By construction, one notes that if $\beta = \gamma$ then 
this map is biLipschitz.

\textbf{Step 2}:  For each ordered $n$-tuple $(a_1, \dots, a_n)$ with each 
$a_i$ in $\{1, \dots, 16\}$ (this includes the zero-tuple, where we would 
be mapping the boundary of $I^0$) we choose a point $p_{(a_1, \dots, 
a_n)}$ 
in $J^n_{(a_1, \dots, a_n)}$ and then send all of the points in the 
boundary of $I^n_{(a_1, \dots, a_n)}$ to $p_{(a_1, \dots, a_n)}$.

\textbf{Step 3}:  The remaining region of $I^0$ consists of sets of the 
form $S^n_{(a_1, \dots, a_n)}$ defined as the set of all points in 
$I^n_{(a_1, \dots, a_n)}$ which do not lie in $I^{n + 1}_{(a_1, \dots, 
a_n, v)}$ for $v = 1, 2, \dots, 16$.  The closure of this region includes 
the boundary of 
$I^n_{(a_1, \dots, a_n)}$ and of $I^{n + 1}_{(a_1, \dots, a_n, v)}$ for $v 
= 1, \dots, 16$.  Fixing $(a_1, \dots, a_n)$ (we may work on each 
$S^n_{(a_1, \dots, a_n)}$ separately) we define the map $f$ from the 
interval $[0, 16]$ to $\textbf{R}^4$ to be a smooth function sending $0$ 
to $p_{(a_1, \dots, a_n)}$ and $v = 1, \dots, 16$ to $p_{(a_1, \dots, 
a_n, v)}$.  We 
can suppose $f$ has Lipschitz norm comparable to $\gamma^n$.  We then 
define $g$ to be a smooth, real-valued, Lipschitz function (with 
Lipschitz coefficient comparable to $\beta^{-n}$) on the closure of 
$S^n_{(a_1, \dots, a_n)}$ which sends the boundary of $I^n_{(a_1, \dots, 
a_n)}$ to $0$ and the boundary of $I^{n + 1}_{(a_1, \dots, a_n, v)}$ to 
$v$.  We can create such a $g$ by the Whitney extension theorem (the 
construction is more straightforward if we do not require smoothness).  
On the closure 
of $S^n_{(a_1, \dots, a_n)}$ (the 
construction merely repeats the existing one on the boundary) set $F = f 
\circ g$; $F|\overline{S^n_{(a_1, \dots, a_n)}}$ has Lipschitz norm 
comparable to $(\frac{\gamma}{\beta})^n$.

Note that if $\gamma < \beta$, $(\frac{\gamma}{\beta})^n$ goes to zero as 
$n$ goes to 
infinity, which means that $F$ is differentiable (in the Pansu sense) at 
each point of $I$ with derivative zero.  Further, by construction $F$ is 
$C^1$ outside of $I$ where the Pansu differential always has rank zero or 
one (and this differential approaches zero as we approach points of $I$); 
in fact, it is locally constant near the boundaries of the relevant cubes 
if we use the Whitney extension, so the construction here is indeed an 
appropriate analogue of \cite{K}.
\end{proof}

In fact, because the constructed map is constant on the boundary of $I^0$, 
nesting appropriately-rescaled examples of this form inside each other 
yield the following corollary.

\begin{co}

Suppose that $G$ is a discretizable Carnot group with homogeneous dimension $k$.  There 
exists a bounded open $U \subset G$ and a Lipschitz map $F:  U 
\rightarrow \textbf{R}^k$ such that $F(U)$ has Hausdorff dimension $k$.

\end{co}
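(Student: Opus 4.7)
The plan is to build $F$ by nesting together countably many appropriately rescaled copies of the construction from Theorem 3.2, one for each $\epsilon_n = 1/n$, inside the base cube $Q(0,0) \subset G$.  The key observation, made in the paragraph immediately preceding the corollary, is that the map produced in the proof of Theorem 3.2 is constant on the boundary of its initial cube.

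First, I fix a sequence $\{Q_n\}$ of open subcubes of $Q(0,0)$ with pairwise disjoint closures (say of diameters $1/n$, mutually separated).  For each $n$, applying Theorem 3.2 with $\epsilon = 1/n$ furnishes a bounded open $V_n \subset G$ and a Lipschitz map $G_n: V_n \rightarrow \textbf{R}^k$ whose image has positive $(k - 1/n)$-dimensional Hausdorff measure; moreover $G_n$ is constant on $\partial V_n$, and by translating in the target we may assume this constant is $0$.  Next, using a left translation composed with a dilation $\delta_{\mu_n}$, I transport $V_n$ to fit inside $Q_n$, and I simultaneously rescale the target $\textbf{R}^k$ by a Euclidean factor chosen so that the resulting map $\tilde{F}_n: U_n \rightarrow \textbf{R}^k$ is $1$-Lipschitz, where $U_n \subset Q_n$ is the rescaled copy of $V_n$.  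Because rescaling does not alter Hausdorff dimension, $\tilde{F}_n(U_n)$ still has dimension $k - 1/n$, and $\tilde{F}_n$ still vanishes on $\partial U_n$.

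Then I set $U := \bigcup_n U_n$ (open and bounded in $G$) and $F := \tilde{F}_n$ on each $U_n$.  To verify Lipschitz continuity, the only nontrivial case is $x \in U_n$ and $y \in U_m$ with $m \neq n$.  Since $d_{CC}$ is a length metric and $\overline{U_n} \cap \overline{U_m} = \emptyset$, one can join $x$ to $y$ by a horizontal curve $\gamma$ of length arbitrarily close to $d_{CC}(x, y)$; choosing $z \in \partial U_n$ as the last exit of $\gamma$ from $U_n$ and $w \in \partial U_m$ as the first subsequent entry into $U_m$, the $1$-Lipschitz property of $\tilde{F}_n$ and $\tilde{F}_m$ (together with $F(z) = F(w) = 0$, interpreting $\tilde{F}_n$ by continuous extension to $\overline{U_n}$) gives $|F(x) - F(y)| \leq d_{CC}(x, z) + d_{CC}(w, y) \leq \mathrm{length}(\gamma)$.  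Taking the infimum shows $F$ is $1$-Lipschitz on $U$.  Finally, $F(U) \supset \tilde{F}_n(U_n)$ for every $n$, so $\dim_H F(U) \geq k - 1/n$ for all $n$, hence $\dim_H F(U) \geq k$; the reverse inequality is automatic since $F(U) \subset \textbf{R}^k$.

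The main obstacle is the bookkeeping for the rescalings: the Lipschitz constants of the unrescaled $G_n$ from Theorem 3.2 may blow up as $\epsilon_n \to 0$, but this is absorbed by shrinking the image along with the domain (since Hausdorff dimension is rescaling-invariant).  The constant-on-boundary property from Theorem 3.2 is what permits the patched map $F$ to remain uniformly Lipschitz despite being assembled from countably many heterogeneous pieces.
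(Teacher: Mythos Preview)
Your argument is correct and follows essentially the same idea as the paper: exploit the constant-on-boundary feature of the Theorem 3.2 construction to glue together rescaled copies for $\epsilon_n \to 0$, so that the image picks up Hausdorff dimension at least $k - 1/n$ for every $n$.  The only cosmetic difference is that you place the pieces side by side in pairwise disjoint subcubes, whereas the paper's one-line remark speaks of ``nesting \dots\ inside each other''; either geometric arrangement works, and the Lipschitz gluing you wrote out is exactly the point the paper leaves implicit.
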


\section{Counterexamples}\label{sec4}

In this section we develop two counterexamples to show why Carnot group 
structure, or something close to it, is necessary for the results of 
the previous two sections.

\subsection{A Space-Filling Curve}\label{sec41}

\begin{teo}
There exists an Ahlfors $2$-regular metric space $X$ and a Lipschitz map 
$F: X \rightarrow \textbf{R}^2$ such that $F(X)$ has positive $2$-dimensional 
Hausdorff measure but $F$ is not biLipschitz on any set of positive 
$2$-dimensional measure.
\end{teo}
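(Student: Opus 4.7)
The plan is to construct $X$ as the $\tfrac{1}{2}$-snowflake of the unit interval and $F$ as a classical space-filling curve. Specifically, I would take $X = ([0,1], d)$ with $d(s,t) = |s-t|^{1/2}$ and let $F \colon X \to [0,1]^2 \subset \textbf{R}^2$ be a Hilbert (or Peano) space-filling curve, viewed as a map out of $(X,d)$.

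The easy properties I would verify first: a snowflake ball $B_r^d(p)$ equals the Euclidean interval $(p-r^2, p+r^2)\cap[0,1]$, so pushing Lebesgue measure on $[0,1]$ across the identity gives that $H^2(B_r^d(p))$ is comparable to $r^2$, making $(X,d)$ Ahlfors $2$-regular. The classical H\"older estimate $|F(s)-F(t)| \le C|s-t|^{1/2} = C\,d(s,t)$ enjoyed by space-filling curves says precisely that $F$ is Lipschitz out of $(X,d)$, and its surjectivity onto $[0,1]^2$ provides the positive $H^2$-image.

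The heart of the argument is showing that $F$ is not biLipschitz on any $A\subset X$ with $H^2(A)>0$. Here I would exploit the rigid quasi-triangle inequality of the snowflake: for $s<t<u$ in $[0,1]$,
$$d(s,t) + d(t,u) \;=\; \sqrt{t-s} + \sqrt{u-t} \;\le\; \sqrt{2}\,\sqrt{u-s} \;=\; \sqrt{2}\,d(s,u).$$
If $F|_A$ were $K$-biLipschitz, every monotone triple $s<t<u$ in $A$ would satisfy
$$|F(s)-F(t)| + |F(t)-F(u)| \;\le\; K^{2}\sqrt{2}\,|F(s)-F(u)|.$$
Iterating this bound along $n$ monotone points chosen inside a density-point neighborhood $B_r^d(p)$ of some $p\in A$ (where Ahlfors $2$-regularity lets me locate many well-spaced points of $A$) should force $F(A)$ to lie inside a thin curve-like subset of $\textbf{R}^2$ whose $H^2$-measure vanishes, contradicting $H^2(F(A))\ge K^{-2}H^2(A)>0$.

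The main obstacle I expect is this last step: turning the pointwise three-point collinearity bound into an honest rectifiability or dimensional bound on $F(A)$. A cleaner fallback is a blowup argument: using the self-similarity of the snowflake and an Arzel\`a--Ascoli extraction at a density point of $A$, pass to a limit map $G\colon(X,d)\to\textbf{R}^2$ that is Lipschitz and remains biLipschitz on a full-measure subset, then derive a contradiction from the fact that the $\tfrac{1}{2}$-snowflake line is topologically one-dimensional while a full-$H^2$-measure subset of a planar region cannot be so thinned out. If the naive Hilbert-curve choice for $F$ turns out to admit biLipschitz pieces (as can happen when a space-filling curve splits into finitely many monotone injective branches), I would replace it with an infinitely-folded Tyson-style Cantor construction, engineered so that no finite number of monotone branches captures a positive-measure subset of $X$ on which $F$ is injective.
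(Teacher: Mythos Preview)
Your setup coincides with the paper's: $X=([0,1],|\cdot|^{1/2})$ and $F$ a classical space-filling curve, with the Ahlfors regularity and Lipschitz checks exactly as you describe. The divergence is in the heart of the argument, and there you have a genuine gap.

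Your proposed mechanism is the snowflake three-point inequality, hoping it forces $F(A)$ to be ``curve-like'' and hence $H^2$-null. But the inequality $|F(s)-F(t)|+|F(t)-F(u)|\le K^2\sqrt{2}\,|F(s)-F(u)|$ is only a bounded-turning condition on the ordered image, and bounded-turning arcs in $\mathbf{R}^2$ can have positive area (Osgood curves). So the step ``iterating this bound \dots\ should force $F(A)$ to lie inside a thin curve-like subset whose $H^2$-measure vanishes'' does not go through as stated; you would need a much finer differentiation or tangent-cone argument to extract a contradiction, and your fallback blowup sketch is not yet a proof. Your final worry---that the Hilbert curve might decompose into finitely many injective branches on which it is biLipschitz---is pointing in the wrong direction: the relevant feature of space-filling curves is precisely their \emph{non}-injectivity.

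The paper exploits that feature directly and avoids all of this. The Stein--Shakarchi curve is measure-preserving and, crucially, identifies a pair of points $x,x'$ at square-root distance $\ge\tfrac12$ with $F(x)=F(x')$; moreover this collision persists self-similarly at every dyadic scale. If $F$ were biLipschitz on some $A$ with $H^2(A)>0$, pick a Lebesgue density point of $F(A)$, pass to a small dyadic square $Q$ with $|Q\cap F(A)|>(1-\varepsilon)|Q|$, pull back by measure-preservation to get $|J\cap A|>(1-\varepsilon)|J|$ for the preimage interval $J$, and rescale so that $A$ has nearly full measure in $[0,1]$. Now choose $y,y'\in A$ arbitrarily close to the colliding pair $x,x'$: then $d(y,y')$ is bounded below while $|F(y)-F(y')|$ is arbitrarily small, contradicting biLipschitz. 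This density-plus-collision argument is short, elementary, and uses nothing about the snowflake geometry beyond the Lipschitz bound; I recommend replacing your three-point/blowup plan with it.
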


\begin{proof}  The function in question will be the space-filling curve 
$F$ from $[0, 1]$ (equipped with the square root distance metric) to the 
unit square in $\textbf{R}^2$ mentioned in Section 7.3 of \cite{Ste}.  
Although this function is a surjective map of spaces with Hausdorff 
dimension 2 and Lipschitz, it is not biLipschitz on any subset with positive
Hausdorff 2-measure.  
To see this, suppose that the space-filling curve $F$ is biLipschitz on a 
set $A$ with $H^2(A) > 0$.  As $F(A)$ has 
positive Lebesgue 
measure, it contains a point $x$ of Lebesgue density one. Letting 
$\epsilon > 0$ there exists $\delta > 0$ such that $$|B(x; \delta) \cap 
F(A)| > (1 - \epsilon) |B(x; \delta)|.$$  Writing out the binary expansion 
of the components of $x$ and of $\delta$, $B(x; \delta)$ contains a 
dyadic cube $Q$ of side at least $.1 \delta$; as $$\epsilon |B(x; 
\delta)| \leq 1000 \epsilon |Q|, |Q \cap F(A)| > (1 - 1000\epsilon) 
|Q|.$$ 

\noindent As $F$ is measure-preserving, letting $J$ be the preimage of $Q$ 
we conclude
$$|J \cap A| > (1 - 1000\epsilon) |J|.$$   By rescaling and translating 
we can suppose $F$ is therefore biLipschitz on a set $A$ of Hausdorff 
2-measure arbitrarily close to $1$ 
(although the rescaled $F$ is not identical to our space-filling curve, 
it preserves all the relevant properties, such as being Lipschitz in the 
appropriate metric, measure-preserving, and sending a pair of points whose 
'square root' distance is at least $\frac{1}{2}$ to the same point).

Let $x$, $x'$ be two points which are at least $\frac{1}{4}$ apart in 
Euclidean distance (and therefore $\frac{1}{2}$ away with respect to square root 
distance) such that $F(x) = F(x')$.  We can suppose that $y, y' \in A$ 
are arbitrarily close to $x$, $x'$ respectively; therefore, $|y - y'| 
\geq \frac{1}{4}$; however, $$|F(y) - F(y')| \leq |F(x) - F(y)| + |F(x') - 
F(y')|$$ which can be made arbitrarily small by the Lipschitz property 
(all distances use the square root metric in the domain and the Euclidean 
metric in the image) showing that $F$ cannot be biLipschitz on $A$ with 
any coefficient.
\end{proof}

In this example, the third and fourth properties (involving 
differentiability) from Section 2.3 fail.  This suggests that some notion 
of differentiability is necessary for the results in \cite{J} to extend 
to other spaces. 

\subsection{The Grushin Plane}\label{sec42}

\begin{teo}
There exists a 2-dimensional sub-Riemannian manifold $M$ with Hausdorff 
dimension 2, an open $U \subset M$, and a Lipschitz map $$F: U 
\rightarrow{\textbf{R}^2}$$ which is not decomposable in the following 
sense:  There does not exist a countable collection $\{A_i\}$ of sets such that 
$$H^2(F(U\backslash \cup_i A_i)) = 0$$ and $F|A_i$ is biLipschitz for 
each $i$.
 \end{teo}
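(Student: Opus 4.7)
The plan is to take $M$ to be the Grushin plane, namely $\mathbb{R}^2$ endowed with the sub-Riemannian structure whose horizontal frame is $X_1 = \partial_x$, $X_2 = x \partial_y$, and to build $F : U \to \mathbb{R}^2$ whose restriction to the singular line $Y = \{0\} \times \mathbb{R}$ is a space-filling Peano--Hilbert curve. The key observation is that motion along $Y$ is ``vertical'' with respect to the horizontal frame, so it must be realized by excursions off $Y$ and commutators of $X_1, X_2$; this makes $d_{CC}((0,y_1),(0,y_2)) \asymp |y_1 - y_2|^{1/2}$, so $(Y, d_{CC})$ is bi-Lipschitz equivalent to a snowflake of the real line of dimension $2$ and therefore has positive $H^2$-measure on bounded segments.

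I would first verify the snowflake identification on $Y$. Then I would take a standard Peano--Hilbert curve $\psi : [0,1] \to [0,1]^2$, which satisfies $|\psi(s)-\psi(t)| \leq C|s-t|^{1/2}$, and transfer it via the above identification to a surjection $\phi : Y \cap [0,1] \to [0,1]^2$ that is Lipschitz from $(Y, d_{CC})$ to Euclidean $\mathbb{R}^2$. Because $\mathbb{R}^2$ admits coordinate-wise McShane extension, I would extend $\phi$ to a Lipschitz map $F : U \to \mathbb{R}^2$ where $U$ is a bounded open neighborhood of $Y \cap [0,1]$ in $M$.

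To show $F$ is not decomposable, suppose towards a contradiction that $\{A_i\}_{i=1}^\infty$ is a countable collection with each $F|_{A_i}$ bi-Lipschitz and $H^2(F(U \setminus \bigcup_i A_i)) = 0$. Since $F(Y \cap [0,1]) = [0,1]^2$ has positive $H^2$-measure and $F(Y \cap (U \setminus \bigcup_i A_i))$ has $H^2$-measure zero, countable additivity forces some $i$ to satisfy $H^2(F(Y \cap A_i)) > 0$. The restriction $F|_{Y \cap A_i} = \phi|_{Y \cap A_i}$ is then bi-Lipschitz from a subset of $(Y, d_{CC})$ of positive $H^2$-measure to a subset of $\mathbb{R}^2$ of positive Lebesgue measure. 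Transporting this back through the snowflake identification, we get a bi-Lipschitz restriction of $\psi$ (viewed as a map from $([0,1], |\cdot|^{1/2})$ to Euclidean $\mathbb{R}^2$) on a set of positive $H^2$-measure. But the argument from the proof of Theorem 4.1 applies verbatim: density points of the image in $[0,1]^2$ yield a dyadic square on which the biLipschitz restriction has $H^2$-density arbitrarily close to $1$, and after rescaling one finds two preimage points separated by a positive amount in the snowflake metric that $\psi$ identifies, contradicting the biLipschitz hypothesis.

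The hardest step, and the one most particular to the Grushin setting, is verifying carefully that $d_{CC}$ on $Y$ really gives a snowflake of exponent $1/2$ and that an open set $U$ containing $Y \cap [0,1]$ does exist on which the McShane extension remains genuinely Lipschitz in the full CC metric (not merely on $Y$). Once this geometric input is in place, the non-decomposability argument is the same Peano--Hilbert/density-point argument already developed in Section 4.1, applied on the singular line rather than on $[0,1]$.
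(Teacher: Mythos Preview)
Your proposal is correct and follows essentially the same approach as the paper: take the Grushin plane, use that the singular axis carries the square-root snowflake metric, define $F$ on that axis to be a space-filling curve, extend to an open neighborhood using that the target is Euclidean, and deduce non-decomposability from the fact (Theorem~4.1) that the space-filling curve admits no positive-measure biLipschitz piece. Your contradiction is phrased as ``some $A_i$ meets the axis in a set of positive $H^2$-measure'' while the paper phrases it contrapositively as ``each $A_i$ meets the axis in a null set, so the garbage set covers almost all of the axis,'' but these are the same argument.
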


\begin{proof}  We use the Grushin plane $M$ as our sub-Riemannian 
manifold.

To construct the Grushin plane we define a Riemannian metric on the 
following region of $\textbf{R}^2$:  $\{(x, y):  y \neq 0\}$.

This metric is defined as $ds^2 = dx^2 + x^{-2} dy^2$.  We then use this 
metric to induce a geodesic structure on all of $\textbf{R}^2$, where a 
rectifiable curve must have horizontal tangent at each point that it 
crosses the $y$-axis.

One can observe that off of the vertical axis, the Grushin plane is locally 
biLipschitz to Euclidean space (but with a constant that blows up as we 
get closer to the axis).  However, the distance between two points on the 
vertical axis is proportional to the square root of their Euclidean distance.

In other words, the Grushin plane is a union of a 
(disconnected) Riemannian manifold and a line of Hausdorff dimension two, 
making it a sub-Riemannian manifold of both Euclidean and Hausdorff 
dimension two.

To construct our counterexample, we consider an open neighborhood of the 
segment $S$ joining $(0, 0)$ to $(0, 1)$, say:  $U_\epsilon = (-\epsilon, 
\epsilon) \times (-\epsilon, 1 + \epsilon)$ for $\epsilon > 0$.  The 
space-filling curve previously constructed as in Chapter 7 of \cite{Ste} 
has already been 
shown to be Lipschitz when defined as a function from a set which is 
biLipschitz to $S$ with image the unit square.  We can extend this 
mapping to a Lipschitz mapping $F$ from $U_\epsilon$ to $\textbf{R}^2$ by 
standard constructions (note the importance of having a Euclidean target 
space here).  

However, there does not exist a countable collection of sets $A_1, 
\dots, A_n, \dots$ such that $G := U_\epsilon \backslash \bigcup_n A_n$ is sent to 
a set of arbitrarily small Hausdorff content by $F$ and $F$ is 
biLipschitz when restricted to the $A_n$.  This is because $A_n \cap S$ 
must be a nullset 
(by the previous arguments concerning the space-filling curve for each 
$G$) which implies that $G$ must contain almost all of $S$, in the sense 
of Hausdorff measure.  Therefore, $F(G)$ must contain almost all of the 
unit square in the sense of Hausdorff measure (or Hausdorff content, 
which is equivalent in this case), producing our desired contradiction.
\end{proof}

In this example, the first and second properties (involving 
homogeneity) from Section 2.3 fail, which suggests that some notion 
of homogeneity is also necessary for the results in \cite{J} to extend to 
other spaces.

\bigskip
\bigskip
Department of Mathematics and Statistics, Helsingin yliopisto, Helsinki, Finland

{\it E-mail address}:  william.meyerson@helsinki.fi

\end{document}